\newcommand{\charg}[3]{{{\rm H}}^{#1}({#2},{#3})}
\newtheorem{theorem}{Theorem}[section]
\newtheorem{lemma}[theorem]{Lemma}
\newtheorem{corollary}[theorem]{Corollary}
\newtheorem{proposition}[theorem]{Proposition}
\newtheorem{definition}[theorem]{Definition}
\begin{document}

\title{Universal deformation rings and fusion}

\author{David C. Meyer\\Department of Mathematics\\University of Iowa\\
Iowa City, IA 52242-1419\\ \textsf{david-c-meyer@uiowa.edu}}

\date{}

\maketitle

\begin{abstract}
We study in this paper the extent to which one can detect fusion in certain finite groups $\Gamma$ from information about the universal deformation rings $R(\Gamma,V)$ of absolutely irreducible $\mathbb{F}_p\Gamma$-modules $V$.  The $\Gamma$ we consider are extensions of either abelian or dihedral groups $G$ of order prime to $p$ by an elementary abelian $p$-group of rank 2.
\end{abstract}

\section{Introduction}
\label{s:intro}

This paper has to do with determining information about the internal structure of a finite group $\Gamma$ from the knowledge of the universal deformation rings $R(\Gamma,V)$ associated to absolutely irreducible $\mathbb{F}_p\Gamma$-modules $V$.  The kind of internal structure we will consider is the fusion of certain subgroups $N$ in $\Gamma$.  A pair of elements of $N$ are said to be fused in $\Gamma$ if they are conjugate in $\Gamma$, but not in $N$.  By determining the fusion of $N$ in $\Gamma$, we mean listing all such pairs.  The universal deformation ring $R(\Gamma,V)$ is characterized by the property that the isomorphism class of every lift of $V$ over a complete local commutative Noetherian ring $R$ with residue field $\mathbb{F}_p$ arises
from a unique local ring homomorphism $\alpha: R(\Gamma,V)\to R$.  Our main goal is thus to determine how to transfer information about the universal deformation rings to information about the structure of groups.  It is natural to expect a connection with fusion because fusion plays a key role in the character theory of $\Gamma$, which in turn enters into finding universal deformation rings of representations.

In this paper, we consider $\Gamma$ which are extensions of a group $G$ whose order is
relatively prime to $p$ by an elementary abelian $p$-group $N$ of rank 2. We can now state our main result:

\begin{theorem}
Let $G$ be a dihedral group of order $2n \geq 6$ and let $p$ be an odd prime such that $p \equiv 1$ mod $n$.  Fix an irreducible action of $G$ on $N = \mathbb{Z}/p\mathbb{Z} \times \mathbb{Z}/p\mathbb{Z}$, and let $\Gamma$ be the resulting semi-direct product of $G$ with $N$.  
\begin{enumerate}
\item[a.] If the center of $G$ acts trivially on $N$, then one can determine the fusion of $N$ in $\Gamma$ from the absolutely irreducible $\mathbb{F}_p\Gamma$-modules $V$ of dimension 2 over $\mathbb{F}_p$ which have universal deformation ring $R(\Gamma,V)$ different from $\mathbb{Z}_p$.
\item[b.] If the center of $G$ acts non-trivially on $N$, then $n$ is even and $R(\Gamma,V) \cong \mathbb{Z}_p$ for all absolutely irreducible $\mathbb{F}_p\Gamma$-modules $V$ of dimension 2 over $\mathbb{F}_p$.  In this case, one can determine the fusion of $N$ in $\Gamma$ if and only if $n$ is either a power of $2$, or $n = 2 q$, for some odd prime $q$.
 
\end{enumerate}
\end{theorem}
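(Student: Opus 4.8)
The plan is to break the argument into a cohomological computation of the rings $R(\Gamma,V)$ for the relevant $V$ and an elementary study of the $G$-action on $N$, linked by the fact that in part (a) one can read the $\mathbb{F}_pG$-module $W=N$ off from the modules $V$ with $R(\Gamma,V)\neq\mathbb{Z}_p$. First, some reductions. Since $N$ is a normal $p$-subgroup and we are in characteristic $p$, every irreducible $\mathbb{F}_p\Gamma$-module $V$ has $V^N=V$, so $N$ acts trivially and $V$ is inflated from an irreducible $\mathbb{F}_pG$-module; as $p\equiv 1\pmod n$, $\mathbb{F}_p^\times$ contains a primitive $n$-th root of unity $\zeta$ and the $2$-dimensional absolutely irreducible $\mathbb{F}_pG$-modules are exactly the modules $\rho_k$ with $r\mapsto\mathrm{diag}(\zeta^k,\zeta^{-k})$ and $s$ interchanging the two coordinates, subject to $\rho_k\cong\rho_{n-k}$ and $2k\not\equiv 0\pmod n$. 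Write $W$ for $N$ as an $\mathbb{F}_pG$-module, so $W\cong\rho_j$ for some admissible $j$. The point to keep in mind is that the fusion of $N$ in $\Gamma$ is just the partition of $N=\mathbb{F}_p^2$ into $G$-orbits; the image of $G$ in $\mathrm{GL}_2(\mathbb{F}_p)$ is a dihedral group of order $2m$ with $m=n/\gcd(j,n)\ge 2$, the largest $G$-orbit on $N$ has size $2m$, and distinct values of $m$ give distinct fusion patterns, so in particular the fusion is determined once $W$ is known up to isomorphism.

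Next I compute $R(\Gamma,V)$ for $V=\rho_k$. Since $\gcd(|G|,p)=1$, the Lyndon--Hochschild--Serre spectral sequence for $1\to N\to\Gamma\to G\to 1$ collapses and gives $\charg{i}{\Gamma}{\mathrm{Hom}_{\mathbb{F}_p}(V,V)}\cong\bigl(\charg{i}{N}{\mathbb{F}_p}\otimes\mathrm{Hom}_{\mathbb{F}_p}(V,V)\bigr)^{G}$. Here $\charg{1}{N}{\mathbb{F}_p}\cong N^{\vee}\cong W$ (the $2$-dimensional dihedral modules are self-dual), and a character computation yields $\mathrm{Hom}_{\mathbb{F}_p}(V,V)=\mathrm{End}(\rho_k)\cong\mathbf 1\oplus\epsilon\oplus\rho_{2k}$ with $\epsilon=\det\rho_k$, where $\rho_{2k}$ is interpreted via its decomposition into $1$-dimensional characters whenever $2k\equiv 0$ or $n/2\pmod n$. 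Taking $G$-invariants gives $\charg{1}{\Gamma}{\mathrm{Hom}_{\mathbb{F}_p}(V,V)}\cong\mathrm{Hom}_G(W,\rho_{2k})$, which is $1$-dimensional when $\rho_{2k}$ is irreducible and $\rho_{2k}\cong W$, and is $0$ otherwise. Since $V$ is absolutely irreducible the universal deformation ring exists, and it is isomorphic to $\mathbb{Z}_p$ precisely when this first cohomology vanishes; hence $R(\Gamma,V)\neq\mathbb{Z}_p$ if and only if $\rho_{2k}$ is irreducible and $\rho_{2k}\cong W$, and in that case $W$ is recovered intrinsically from $V$ as the unique $2$-dimensional indecomposable summand of $V\otimes V$. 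This cohomology computation is the technical heart of the proof: one must keep careful track of the $\mathbb{F}_pG$-module decompositions of $\mathrm{Hom}_{\mathbb{F}_p}(V,V)$ and of $\charg{\ast}{N}{\mathbb{F}_p}$ and handle the degenerate cases where $\rho_{2k}$ fails to be irreducible (exactly $2k\equiv 0$ or $n/2\pmod n$).

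For part (a): if the center of $G$ acts trivially on $N$, then either $n$ is odd and $Z(G)=1$, or $n$ is even and $\rho_j(r^{n/2})=I$, which forces $j$ to be even. Solving $2k\equiv\pm j\pmod n$ then shows there is exactly one module $V$ with $R(\Gamma,V)\neq\mathbb{Z}_p$ when $n$ is odd, and exactly two (namely $\rho_{j/2}$ and $\rho_{j/2+n/2}$) when $n$ is even. From any one such $V$ one reads off $W\cong\rho_{2k}$ as above, and the $\mathbb{F}_pG$-module structure of $N$ determines the partition of $N$ into $G$-orbits, i.e. the fusion of $N$ in $\Gamma$; this gives (a).

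For part (b): if the center acts non-trivially then $Z(D_{2n})\neq 1$ forces $n$ even, and $\rho_j(r^{n/2})=(-1)^{j}I\neq I$ forces $j$ odd. Then for every admissible $k$ the module $\rho_{2k}$ has even index while $W=\rho_j$ has odd index, so $\rho_{2k}\not\cong W$, and by the computation above $R(\Gamma,V)\cong\mathbb{Z}_p$ for every $2$-dimensional absolutely irreducible $V$. Thus the available data is the same for all admissible $\Gamma$ with the given $n$, so the fusion can be determined from it exactly when it does not depend on the admissible odd index $j$, i.e. when $m=n/\gcd(j,n)$ is independent of $j$. Writing $n=2^{a}n_{0}$ with $n_{0}$ odd and $a\ge 1$, one checks that $\gcd(j,n)=\gcd(j,n_{0})$ ranges exactly over the divisors of $n_{0}$ when $a\ge 2$, and over the divisors of $n_{0}$ other than $n_{0}$ itself when $a=1$ (there $\rho_{n_{0}}=\rho_{n/2}$ is reducible and not an admissible action). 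Hence $m$ is unique if and only if $n_{0}$ has a single divisor (when $a\ge 2$) or exactly two divisors (when $a=1$) --- equivalently, if and only if $n$ is a power of $2$ or $n=2q$ for an odd prime $q$. The delicate points here are the divisor bookkeeping (remembering to discard $d=n_{0}$ when $n\equiv 2\pmod 4$) and confirming that distinct values of $m$ genuinely produce distinct fusion patterns on $N$.
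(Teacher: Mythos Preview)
Your argument is correct and follows essentially the same architecture as the paper: the LHS collapse, the decomposition $\mathrm{End}(\rho_k)\cong \mathbf 1\oplus\epsilon\oplus\rho_{2k}$, the characterization of $R(\Gamma,V)\neq\mathbb{Z}_p$ by $\rho_{2k}\cong W$, and the divisor bookkeeping in part~(b) all match the paper's Lemmas~4.5--4.6, Proposition~4.4, and \S4.5. Two points of packaging differ and are worth noting. First, the paper actually computes $R(\Gamma,V)\cong\mathbb{Z}_p[[t]]/(t^2,pt)$ in the non-trivial case via a result of Bleher--Chinburg--de~Smit, whereas you only need (and only use) the equivalence $d^1_V=0\Leftrightarrow R(\Gamma,V)\cong\mathbb{Z}_p$, which is enough for the theorem as stated and avoids that citation. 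Second, for part~(a) the paper passes through kernels: it shows the set $\{\ker\psi:\psi\text{ cohomologically maximal for }\phi\}$ determines the fusion, which requires the somewhat fiddly Lemma~4.10 relating $\{(d_0,n),(k-d_0,n)\}$ to $(i_0,n)$ when $n$ is even. Your route---recover $W$ directly as the unique $2$-dimensional summand of $V\otimes V$ for any $V$ with $R(\Gamma,V)\neq\mathbb{Z}_p$, then read off the orbit structure from $W$---is cleaner and sidesteps that lemma entirely. The one step you flag but do not fully justify, that equal values of $m=n/\gcd(j,n)$ yield identical fusion (not merely that distinct $m$ give distinct fusion), follows because the image of $\rho_j$ in $\mathrm{GL}_2(\mathbb{F}_p)$ is literally the same subgroup for all $j$ with a given $m$; the paper handles this by the explicit orbit list in Proposition~4.8 and Corollary~4.9.
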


In section \ref{ss:ab} we prove a weaker result when $G$ is abelian.  In the course of proving Theorem 1.1, we must calculate $\charg{i}{\Gamma}{ \mathrm{Hom}_{\,\mathbb{F}_p}(V,V)}$, for $i = 1, 2$, since these enter into the computation of $R(\Gamma,V)$.

The paper is organized as follows. In section \ref{s:prelim}, we recall the definitions of deformations and
deformation rings, including some basic results. 
In section 3, we concentrate on the case when $\Gamma$ 
is an extension of a finite group $G$ by an elementary abelian $p$-group of rank $\ell \geq 2$. 
We give an explicit formula for the cohomology groups 
$\mathrm{H}^i(\Gamma,\mathrm{Hom}_{\,\mathbb{F}_p}(V,V))$ for $i=1,2$ for all projective
$\mathbb{F}_pG$-modules $V$ which are viewed as $\mathbb{F}_p\Gamma$-modules by inflation
(see Theorem 3.1).  In section 4, we prove our main results, Theorems \ref{th:no2} and \ref{th:no3}, on the connection between fusion and universal deformation rings, respectively cohomology groups, in the case when $G$ is a dihedral group.  In section 4.6, we briefly discuss the case when $G$ is abelian and compare this case to the dihedral one.

This paper is part of my dissertation at the University of Iowa under the supervision of Professor Frauke Bleher \cite{meyer}.  I would like to thank her for all of her advice and guidance.

\section{Preliminaries}
\label{s:prelim}

In this section, we give a brief introduction to universal deformation rings and deformations. 
For more background material, we refer the reader to \cite{mazur} and \cite{desmit-lenstra}.

Let  $p$ be an odd prime, $\mathbb{F}_p$ be the field with $p$ elements, and $\mathbb{Z}_p$ denote the ring of $p$-adic integers. Let 
$\hat{\mathcal{C}}$ be the category of all complete local commutative Noetherian rings with residue field 
$\mathbb{F}_p$. Note that all rings in $\hat{\mathcal{C}}$ have a natural $\mathbb{Z}_p$-algebra structure.
The morphisms in $\hat{\mathcal{C}}$ are continuous $\mathbb{Z}_p$-algebra 
homomorphisms that induce the identity map on $\mathbb{F}_p$. 

Suppose $\Gamma$ is a finite group and $V$ is a finitely generated $\mathbb{F}_p\Gamma$-module. 
A lift of $V$ over an object $R$ in $\hat{\mathcal{C}}$ is a pair $(M,\phi)$ where $M$ is a finitely generated 
$R\Gamma$-module that is free over $R$, and $\phi:\mathbb{F}_p\otimes_R M\to V$ is an isomorphism of 
$\mathbb{F}_p\Gamma$-modules. Two lifts $(M,\phi)$ and $(M',\phi')$ of $V$ over $R$ are isomorphic if there is an 
isomorphism $\alpha:M\to M'$ with $\phi=\phi'\circ (\mathrm{id}_{\mathbb{F}_p}\otimes\alpha)$. The 
isomorphism class $[M,\phi]$ of a lift $(M,\phi)$ of $V$ over $R$ is called a deformation of $V$ over $R$, and 
the set of such deformations is denoted by $\mathrm{Def}_\Gamma(V,R)$. The deformation functor
$$\hat{F}_V:\hat{\mathcal{C}} \to \mathrm{Sets}$$ 
sends an object $R$ in $\hat{\mathcal{C}}$ to $\mathrm{Def}_\Gamma(V,R)$ and a morphism $f:R\to R'$ in 
$\hat{\mathcal{C}}$ to the map $\mathrm{Def}_\Gamma(V,R) \to \mathrm{Def}_\Gamma(V,R')$ defined by 
$[M,\phi]\mapsto [R'\otimes_{R,f} M,\phi']$, where  $\phi'=\phi$ after identifying 
$\mathbb{F}_p\otimes_{R'}(R'\otimes_{R,f} M)$ with $\mathbb{F}_p\otimes_R M$.

If there exists an object $R(\Gamma,V)$ in $\hat{\mathcal{C}}$ and a deformation $[U(\Gamma,V),\phi_U]$ of 
$V$ over $R(\Gamma,V)$ such that for each $R$ in $\hat{\mathcal{C}}$ and for each lift $(M,\phi)$ of $V$ over 
$R$ there is a unique morphism $\alpha:R(\Gamma,V)\to R$ in $\hat{\mathcal{C}}$ such that 
$\hat{F}_V(\alpha)([U(\Gamma,V),\phi_U])=[M,\phi]$, then we call $R(\Gamma,V)$ the universal deformation 
ring of $V$ and $[U(\Gamma,V),\phi_U]$  the universal deformation of $V$. In other words, 
$R(\Gamma,V)$ represents the functor $\hat{F}_V$ in the sense that $\hat{F}_V$ is naturally isomorphic to 
$\mathrm{Hom}_{\hat{\mathcal{C}}}(R(\Gamma,V),-)$. In the case when the morphism 
$\alpha:R(\Gamma,V)\to R$ relative to the lift $(M,\phi)$ of $V$ over $R$ is only known to be unique if 
$R$ is the ring of dual numbers over $\mathbb{F}_p$ but may be not unique for other $R$, 
$R(\Gamma,V)$ is called the versal deformation ring of $V$ and 
$[U(\Gamma,V),\phi_U]$ is called the versal deformation of $V$.

By \cite{mazur}, every finitely generated $k\Gamma$-module $V$ has a versal deformation ring $R(\Gamma,V)$. 
Moreover, if $V$ is an absolutely irreducible $\mathbb{F}_p\Gamma$-module, then $R(\Gamma,V)$ is
universal.

The following result shows the connection between $R(\Gamma,V)$ and certain first and second cohomology
groups of $\Gamma$ that are related to $V$.

\begin{theorem} {\rm (\cite[\S1.6]{mazur}, \cite[Thm. 2.4]{bockle})}
\label{thm:udr}
Suppose $V$ is an absolutely irreducible $\mathbb{F}_p\Gamma$-module, and let
$d^i_V=\mathrm{dim}_{\mathbb{F}_p}\mathrm{H}^i(\Gamma,\mathrm{Hom}_{\mathbb{F}_p}(V,V))$ 
for $i=1,2$. Then $R(\Gamma,V)$ is isomorphic to 
a quotient algebra $\mathbb{Z}_p[[t_1,\ldots,t_r]]/J$ where $r=d^1_V$ and $d^2_V$ is an upper bound 
on the minimal number of generators of $J$.
\end{theorem}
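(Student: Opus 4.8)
The plan is to prove this in two stages, governed respectively by $\mathrm{H}^1$ and $\mathrm{H}^2$, via the standard tangent-space-and-obstruction analysis of the functor $\hat{F}_V$. First I would compute the tangent space $\hat{F}_V(\mathbb{F}_p[\epsilon])$, where $\mathbb{F}_p[\epsilon]=\mathbb{F}_p[t]/(t^2)$ is the ring of dual numbers. Writing a lift of $\bar\rho\colon\Gamma\to\mathrm{GL}(V)$ over $\mathbb{F}_p[\epsilon]$ as $g\mapsto(1+\epsilon\,c(g))\bar\rho(g)$, the condition that this be a homomorphism is exactly the $1$-cocycle condition for $c\colon\Gamma\to\mathrm{Hom}_{\mathbb{F}_p}(V,V)$ (with $\Gamma$ acting by conjugation through $\bar\rho$), and two lifts are isomorphic precisely when $c$ changes by a coboundary. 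This yields a natural isomorphism $\hat{F}_V(\mathbb{F}_p[\epsilon])\cong\mathrm{H}^1(\Gamma,\mathrm{Hom}_{\mathbb{F}_p}(V,V))$, so the tangent space has dimension $d^1_V$. Since $V$ is absolutely irreducible, $\hat{F}_V$ is representable by $R=R(\Gamma,V)$, whence $\hat{F}_V(\mathbb{F}_p[\epsilon])\cong\mathrm{Hom}_{\hat{\mathcal{C}}}(R,\mathbb{F}_p[\epsilon])$. A morphism $R\to\mathbb{F}_p[\epsilon]$ annihilates $p$ and $\mathfrak{m}_R^2$, so this Hom-set is dual to the relative cotangent space $\mathfrak{m}_R/(\mathfrak{m}_R^2+pR)$, which therefore has dimension $r=d^1_V$.

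Next I would build the presentation. I would lift a basis of $\mathfrak{m}_R/(\mathfrak{m}_R^2+pR)$ to elements $x_1,\dots,x_r\in\mathfrak{m}_R$ and define $\psi\colon A:=\mathbb{Z}_p[[t_1,\dots,t_r]]\to R$ by $t_i\mapsto x_i$, using completeness of $R$. By construction $\psi$ induces an isomorphism on relative cotangent spaces, so the topological Nakayama lemma (applicable since $R$ is Noetherian and complete) shows $\psi$ is surjective; setting $J=\ker\psi$ gives $R\cong A/J$, which is the first assertion. The same cotangent-space computation forces $J\subseteq\mathfrak{m}_A^2+pA$, a minimality property I will exploit below. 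By Nakayama the minimal number of generators of $J$ equals $\dim_{\mathbb{F}_p}J/\mathfrak{m}_A J$, so it remains to bound this quantity by $d^2_V$.

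For the bound I would use obstruction theory applied to the small extension $0\to J/\mathfrak{m}_A J\to A/\mathfrak{m}_A J\to R\to0$, whose kernel is an $\mathbb{F}_p$-vector space annihilated by the maximal ideal. For any small extension by $\mathbb{F}_p$, lifting a representing homomorphism set-theoretically and measuring the failure of multiplicativity produces a $2$-cocycle whose class in $\mathrm{H}^2(\Gamma,\mathrm{Hom}_{\mathbb{F}_p}(V,V))$ vanishes exactly when the deformation lifts. Applied to the universal deformation over $R$ along the extension above, this gives an obstruction $o\in\mathrm{H}^2(\Gamma,\mathrm{Hom}_{\mathbb{F}_p}(V,V))\otimes_{\mathbb{F}_p}(J/\mathfrak{m}_A J)$, equivalently a linear map $\omega\colon(J/\mathfrak{m}_A J)^{\ast}\to\mathrm{H}^2(\Gamma,\mathrm{Hom}_{\mathbb{F}_p}(V,V))$ sending each $\ell$ to the obstruction along the pushout extension by $\mathbb{F}_p$ determined by $\ell$.

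The crux is to show $\omega$ is injective. If $\omega(\ell)=0$ for some nonzero $\ell$, then the universal deformation lifts along that pushout small extension, so by the universal property the induced morphism $R\to B_\ell$ is a section splitting the extension; comparing this section with the canonical projection $A\to B_\ell$ produces a $\mathbb{Z}_p$-derivation $D\colon A\to\mathbb{F}_p$ that kills $\mathfrak{m}_A^2+pA$ yet restricts to $\ell\neq0$ on $J$, contradicting $J\subseteq\mathfrak{m}_A^2+pA$. Hence $\omega$ is injective, giving $\dim_{\mathbb{F}_p}J/\mathfrak{m}_A J\le\dim_{\mathbb{F}_p}\mathrm{H}^2(\Gamma,\mathrm{Hom}_{\mathbb{F}_p}(V,V))=d^2_V$, which is the desired upper bound. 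I expect the main obstacle to be exactly this obstruction-theoretic bookkeeping: verifying that $o$ is a well-defined cohomology class independent of the chosen set-theoretic lifts and linear in the kernel, and converting the vanishing of $\omega(\ell)$ into an honest splitting via the universal property so that the minimality $J\subseteq\mathfrak{m}_A^2+pA$ can be brought to bear.
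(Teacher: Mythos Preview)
Your outline is the standard Mazur--B\"ockle argument and is essentially correct: identify the tangent space with $\mathrm{H}^1$, present $R(\Gamma,V)$ as a quotient of a power series ring in $d^1_V$ variables via Nakayama, and bound the relations by showing that the obstruction map $(J/\mathfrak{m}_A J)^\ast\to\mathrm{H}^2$ is injective using the universal property and the inclusion $J\subseteq\mathfrak{m}_A^2+pA$.

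However, there is nothing to compare against: the paper does not prove this theorem. It is stated with attribution to \cite[\S1.6]{mazur} and \cite[Thm.~2.4]{bockle} and used as a black box in the subsequent sections (notably in Corollary~3.4 and Proposition~4.7). So while your proposal is a faithful reconstruction of the argument in those references, the paper itself simply cites the result.
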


\section{Cohomology}
\label{s:coh}
\noindent
Let $p$ be an odd prime, and consider a short exact sequence of groups $$0\rightarrow\\N\rightarrow\Gamma\rightarrow G\cong \Gamma/N\rightarrow 1$$ where $N$ is an elementary abelian $p$-group of rank $\ell \geq 2$ and $G$ is a finite group.  We identify $G$ with $\Gamma/N$ in the following.  Note that the action of $G$ = $\Gamma/N$ on $N$ corresponds to an $ \mathbb{F}_{p}$-representation of $G$ denoted by $\phi$.  Let $V$ be a projective $\mathbb{F}_{p}G$-module, and view $V$ also as an ${\mathbb{F}_{p}}\Gamma$-module by inflation.  Let $\tilde{\phi}$ be the contragredient of $\phi$ (i.e. $\tilde{\phi}$ is the dual representation of $\phi$).  Let $V_{\tilde{\phi}}$ (resp. $V_{\tilde{\phi} \wedge \tilde{\phi}}$) denote the $\mathbb{F}_{p}\Gamma$-module associated to $\tilde{\phi}$ (resp. $\tilde{\phi} \wedge \tilde{\phi}$).  If $X$ is a $\Gamma/N$-module, let $X^{\Gamma/N}$ denote the fixed points of the action of $\Gamma/N$.  Let $\otimes$ stand for the tensor product over $\mathbb{F}_p$.  We prove the following result.
\vspace*{.05 in}

\begin{theorem}
\label{th:no1}
Using the above notation,
$$\charg{2}{\Gamma}{{\rm {Hom}}_{\mathbb{F}_{p}}(V,V)}\cong[(V_{\tilde{\phi}}\otimes V^{*}\otimes V)\oplus (V_{\tilde{\phi} \wedge \tilde{\phi}}\otimes V^{*}\otimes V)]^{\Gamma/N}.$$

\noindent
If $N$ is elementary abelian of rank two, the representation $\tilde{\phi} \wedge \tilde{\phi}$ is the one-dimensional representation $\rm{det}\circ (\tilde{\phi}).$
\end{theorem}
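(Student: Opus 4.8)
The plan is to feed the extension $0\to N\to\Gamma\to G\to 1$ into the Lyndon--Hochschild--Serre spectral sequence with coefficients in $W:=\mathrm{Hom}_{\mathbb{F}_p}(V,V)\cong V^*\otimes V$ and to use the projectivity of $V$ to make it collapse. First I would isolate the two structural observations that drive the argument. Since $V$ is a projective $\mathbb{F}_p G$-module and the tensor product over $\mathbb{F}_p$ of a projective $\mathbb{F}_p G$-module with an arbitrary $\mathbb{F}_p G$-module is again projective (a module of the form $\mathbb{F}_p G\otimes_{\mathbb{F}_p}A$ untwists to the free module $(\mathbb{F}_p G)^{\dim A}$), the module $W=V^*\otimes V$ is projective, hence injective, over the self-injective algebra $\mathbb{F}_p G$. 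Second, because $V$ is inflated from $G$, the subgroup $N$ acts trivially on $V$ and hence on $W$, so for every $t$ there is an $\mathbb{F}_p G$-module isomorphism $\mathrm{H}^t(N,W)\cong \mathrm{H}^t(N,\mathbb{F}_p)\otimes_{\mathbb{F}_p}W$, where $G=\Gamma/N$ acts diagonally --- via its conjugation-induced action on $\mathrm{H}^t(N,\mathbb{F}_p)$ and its given action on $W$. In particular $\mathrm{H}^t(N,W)$ is again projective over $\mathbb{F}_p G$.

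Next I would run the spectral sequence $E_2^{s,t}=\mathrm{H}^s(G,\mathrm{H}^t(N,W))\Rightarrow \mathrm{H}^{s+t}(\Gamma,W)$. For each fixed $t$ the coefficient module $\mathrm{H}^t(N,W)$ is injective over $\mathbb{F}_p G$, so $E_2^{s,t}=\mathrm{Ext}^s_{\mathbb{F}_p G}(\mathbb{F}_p,\mathrm{H}^t(N,W))=0$ for all $s\geq 1$. Thus only the column $s=0$ survives, there are no nonzero differentials, and the spectral sequence degenerates at $E_2$, giving $\mathrm{H}^n(\Gamma,W)\cong E_2^{0,n}=\mathrm{H}^n(N,W)^G\cong (\mathrm{H}^n(N,\mathbb{F}_p)\otimes W)^{\Gamma/N}$ for all $n$; I then specialize to $n=2$.

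The remaining task is to identify $\mathrm{H}^2(N,\mathbb{F}_p)$ as an $\mathbb{F}_p G$-module. For $p$ odd and $N$ elementary abelian, the standard description of the mod-$p$ cohomology ring gives $\mathrm{H}^*(N,\mathbb{F}_p)\cong \Lambda_{\mathbb{F}_p}(\mathrm{H}^1)\otimes_{\mathbb{F}_p}\mathbb{F}_p[\beta(\mathrm{H}^1)]$, where $\mathrm{H}^1=\mathrm{H}^1(N,\mathbb{F}_p)=\mathrm{Hom}_{\mathbb{F}_p}(N,\mathbb{F}_p)$ is the $\mathbb{F}_p$-dual of $N$ and $\beta$ is the Bockstein homomorphism. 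Since $N$ affords $\phi$, the module $\mathrm{H}^1$ affords the contragredient $\tilde{\phi}$; and since the cup product and the Bockstein are natural, they are $G$-equivariant. Hence in degree $2$ the above reads, as $\mathbb{F}_p G$-modules, $\mathrm{H}^2(N,\mathbb{F}_p)\cong \beta(\mathrm{H}^1)\oplus\Lambda^2(\mathrm{H}^1)\cong V_{\tilde{\phi}}\oplus V_{\tilde{\phi}\wedge\tilde{\phi}}$. Substituting this into the formula of the previous paragraph and distributing the tensor product over the direct sum yields $\mathrm{H}^2(\Gamma,W)\cong[(V_{\tilde{\phi}}\otimes V^*\otimes V)\oplus(V_{\tilde{\phi}\wedge\tilde{\phi}}\otimes V^*\otimes V)]^{\Gamma/N}$, which is the claim. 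For the last assertion, when $N$ has rank two the module $\mathrm{H}^1$ is two-dimensional, so $\tilde{\phi}\wedge\tilde{\phi}=\Lambda^2(\tilde{\phi})$ is the one-dimensional determinant representation $\det\circ\tilde{\phi}$.

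The step I expect to require the most care is the $G$-equivariant bookkeeping: checking that the untwisting isomorphism $\mathrm{H}^t(N,W)\cong\mathrm{H}^t(N,\mathbb{F}_p)\otimes W$ intertwines the natural $G$-action with the diagonal one, and that the splitting of $\mathrm{H}^2(N,\mathbb{F}_p)$ into the image of the Bockstein and the exterior square is a decomposition of $\mathbb{F}_p G$-modules and not merely of $\mathbb{F}_p$-vector spaces. Both follow from naturality, but they are precisely what upgrades the underlying dimension count to the asserted module isomorphism.
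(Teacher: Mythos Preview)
Your argument is correct and shares its skeleton with the paper's: both reduce, via degeneration of the Lyndon--Hochschild--Serre spectral sequence (this is exactly the content of the paper's Proposition~\ref{pr:no1}), to identifying $\mathrm{H}^2(N,\mathbb{F}_p)$ as an $\mathbb{F}_p G$-module. The difference lies in that identification. You invoke the standard presentation of the mod-$p$ cohomology ring of an elementary abelian $p$-group as $\Lambda(\mathrm{H}^1)\otimes\mathbb{F}_p[\beta(\mathrm{H}^1)]$ and read off the degree-two piece as $\beta(\mathrm{H}^1)\oplus\Lambda^2(\mathrm{H}^1)\cong V_{\tilde\phi}\oplus V_{\tilde\phi\wedge\tilde\phi}$, with $G$-equivariance coming from naturality of cup product and Bockstein. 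The paper instead runs the long exact sequence attached to the Kummer sequence $1\to\mu_p\to\mathbb{C}^*\to\mathbb{C}^*\to1$ to obtain a short exact sequence $0\to\mathrm{H}^1(N,\mathbb{C}^*)\to\mathrm{H}^2(N,\mathbb{F}_p)\to\mathrm{H}^2(N,\mathbb{C}^*)\to0$ of $\mathbb{F}_p G$-modules, identifies the outer terms as $V_{\tilde\phi}$ and $V_{\tilde\phi\wedge\tilde\phi}$ via $\mathrm{Hom}(N,\mathbb{F}_p)$ and the Schur multiplier respectively, and obtains the direct-sum decomposition only after tensoring with the projective module $W=V^*\otimes V$ and taking invariants. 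Your route is the textbook topological one and yields an intrinsic $G$-splitting of $\mathrm{H}^2(N,\mathbb{F}_p)$ itself; the paper's route is more number-theoretic in flavor and defers the splitting to the stage where projectivity of $W$ forces it. The two decompositions in fact coincide (the Kummer connecting map on $\mathrm{H}^1$ is the Bockstein), so the approaches converge quickly.
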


In the case when ${\mathbb{F}_{p}}G$ is semisimple, this result will provide a way of using character theory to compute the first and second cohomology groups of $\Gamma$ with coefficients in $\text{Hom}_{\mathbb{F}_{p}}(V,V)$.  To prove Theorem \ref{th:no1} we need the following result.

\vspace*{.05 in}
\noindent
\begin{proposition}
\label{pr:no1}
Let $A$ be a projective ${\mathbb{F}_{p}}G $-module.  Then for all $i \geq 1$, $$A \otimes {\rm H}^{i}(N,{\mathbb{F}}_{p}) \cong {\rm H}^{i}(N,A)$$ as $\mathbb{F}_{p}G$-modules, and $$\charg{i}{\Gamma}{A} \cong \charg{0}{\Gamma/N}{\charg{i}{N}{A}} \cong [\charg{i}{N}{A}]^{G}.$$
\end{proposition}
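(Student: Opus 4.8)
The plan is to prove the two isomorphisms separately, using the Lyndon–Hochschild–Serre (LHS) spectral sequence for the extension $0 \to N \to \Gamma \to G \to 1$ with coefficients in $A$, together with the hypothesis that $A$ is projective (hence injective, since $\mathbb{F}_p G$ is self-injective) as an $\mathbb{F}_p G$-module. First I would establish the module isomorphism $A \otimes \mathrm{H}^i(N,\mathbb{F}_p) \cong \mathrm{H}^i(N,A)$. Since $N$ acts trivially on $A$ (it is inflated from $G$), as $\mathbb{F}_p N$-modules $A$ is just a direct sum of copies of the trivial module, so $\mathrm{H}^i(N,A) \cong A \otimes \mathrm{H}^i(N,\mathbb{F}_p)$ at the level of $\mathbb{F}_p$-vector spaces; the point is to check this respects the induced $G$-action. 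I would do this by using the standard identification $\mathrm{H}^i(N,A) \cong \mathrm{Ext}^i_{\mathbb{F}_p N}(\mathbb{F}_p, A)$ and noting that the residual $G = \Gamma/N$-action on $\mathrm{H}^i(N,-)$ is functorial and compatible with the diagonal action on a tensor product, where $G$ acts on $\mathrm{H}^i(N,\mathbb{F}_p)$ through its conjugation action on $N$ (this is exactly the action giving $V_{\tilde\phi}$, $V_{\tilde\phi \wedge \tilde\phi}$ for $i = 1, 2$ in the rank-two case).

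Next I would turn to the cohomology formula. The LHS spectral sequence has $E_2^{r,s} = \mathrm{H}^r(G, \mathrm{H}^s(N,A)) \Rightarrow \mathrm{H}^{r+s}(\Gamma, A)$. The key observation is that for $s \geq 1$ we have $\mathrm{H}^s(N,A) \cong A \otimes \mathrm{H}^s(N,\mathbb{F}_p)$ by the first part, and since $A$ is projective over $\mathbb{F}_p G$ and $\mathbb{F}_p G$ is self-injective, the tensor product $A \otimes \mathrm{H}^s(N,\mathbb{F}_p)$ is again projective over $\mathbb{F}_p G$ (a projective module tensored with any module is projective, because $\mathbb{F}_p G$ is a symmetric algebra / because $\mathbb{F}_p \otimes -$ preserves projectivity of the left tensor factor). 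Hence $\mathrm{H}^r(G, \mathrm{H}^s(N,A)) = 0$ for all $r \geq 1$ and all $s \geq 1$. Likewise $\mathrm{H}^r(G, \mathrm{H}^0(N,A)) = \mathrm{H}^r(G, A) = 0$ for $r \geq 1$ since $A$ itself is projective. So the only possibly-nonzero entries on the $E_2$-page lie in the column $r = 0$, namely $E_2^{0,s} = \mathrm{H}^s(N,A)^G$. A spectral sequence concentrated in a single column degenerates at $E_2$, giving $\mathrm{H}^i(\Gamma,A) \cong E_2^{0,i} = \mathrm{H}^i(N,A)^G = \mathrm{H}^0(G, \mathrm{H}^i(N,A))$, which is the claim (with $G = \Gamma/N$).

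The main obstacle I anticipate is the bookkeeping around the $G$-action in the first isomorphism — making sure that the $G$-module structure on $\mathrm{H}^i(N,\mathbb{F}_p)$ induced from the LHS setup agrees with the one coming from the functoriality of cohomology under conjugation, and that the isomorphism $\mathrm{H}^i(N,A) \cong A \otimes \mathrm{H}^i(N,\mathbb{F}_p)$ is $G$-equivariant for the diagonal action rather than merely $\mathbb{F}_p$-linear. One clean way around this is to pick an explicit $\mathbb{F}_p N$-projective (= free) resolution $P_\bullet \to \mathbb{F}_p$ and compute $\mathrm{Hom}_{\mathbb{F}_p N}(P_\bullet, A) \cong \mathrm{Hom}_{\mathbb{F}_p N}(P_\bullet, \mathbb{F}_p) \otimes A$ directly, tracking the conjugation action throughout; alternatively one can invoke the known $G$-module structure of $\mathrm{H}^*(N,\mathbb{F}_p)$ for $N$ elementary abelian (polynomial tensor exterior algebra on the dual of $N$) and the universal-coefficients/Künneth compatibility. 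Everything else — projectivity being preserved under tensoring, vanishing of higher $G$-cohomology of projectives, and degeneration of a one-column spectral sequence — is standard and I would cite or dispatch it quickly.
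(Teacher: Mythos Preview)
Your proposal is correct and follows essentially the same route as the paper: establish the $G$-equivariant isomorphism $A \otimes \mathrm{H}^i(N,\mathbb{F}_p) \cong \mathrm{H}^i(N,A)$, deduce that each $\mathrm{H}^s(N,A)$ is projective over $\mathbb{F}_p G$, and then collapse the Lyndon--Hochschild--Serre spectral sequence to the column $r=0$. The only cosmetic difference is in how the first isomorphism is made $G$-equivariant: the paper writes down explicit mutually inverse maps $\Phi,\Psi$ at the cocycle level (using a basis $\{e_j\}$ of $A$ and its dual) and checks they are $\mathbb{F}_p G$-linear and match up coboundaries, whereas you leave this step open among several options; the paper's cocycle argument is exactly your ``pick an explicit resolution and track the conjugation action'' suggestion carried out with the bar complex.
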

\noindent
\begin{proof}
Let $i \geq 1$.  We first show that $\charg{i}{N}{A} \cong A \otimes \charg{i}{N}{{\mathbb{F}}_p}$ as ${\mathbb{F}_{p}}G $-modules, where we identify $G$ with $\Gamma/N$ as before.  Let $Z^{i}(N,A)$ denote the space of $i$-cocycles of $N$ with coefficients in $A$, and let $B^i(N,A)$ denote the space of $i$-coboundaries of $N$ with coefficients in $A$.  Let \{$e_{j}$\} be an ${\mathbb{F}}_{p}$-basis for $A$.  Recall, $N$ acts trivially on $A$.

Consider the maps 
\vspace*{.1 in}

\noindent
$\Phi: A \otimes Z^{i}(N,{\mathbb{F}}_{p}) \rightarrow Z^{i}(N,A)$, \hspace*{.3 in} $a\otimes c \xrightarrow{\Phi}$  ${\Delta}_{c,a}$, for all $(a,c)$ in $A \times Z^{i}(N,{\mathbb{F}}_{p})$

\vspace*{.1 in} 

\noindent
$\Psi: Z^{i}(N,A) \rightarrow A \otimes Z^{i}(N,{\mathbb{F}}_{p})$, \hspace*{.3 in} $d \xrightarrow{\Psi} \sum\limits_{j}^{} e_{j}\otimes({e_{j}}^{*}\circ d)$, for all $d$ in $Z^{i}(N,A)$
\vspace*{.1 in}

\noindent
where ${\Delta}_{c,a}$($n_{1},n_{2},...,n_{i}$) = $c$($n_{1},n_{2},...,n_{i}$)$a$ and ${e_{j}}^{*}$ is the dual basis element to $e_{j}$.  Then $\Psi$ and $\Phi$ are ${\mathbb{F}_{p}}G $-module homomorphisms that are inverses of each other which restrict to isomorphisms between $B^i(N,A)$, and $A \otimes B^i(N,{\mathbb{F}}_{p})$.  Thus, ${\rm H}^{i}(N,A) \cong \dfrac{A \otimes Z^{i}(N,{\mathbb{F}}_{p})}{A \otimes B^i(N,{\mathbb{F}}_{p})}$ as $\mathbb{F}_pG$-modules.

Tensoring the short exact sequence of ${\mathbb{F}_{p}}G $-modules
$$0\rightarrow\ B^{i}(N,{\mathbb{F}}_{p})\rightarrow Z^{i}(N,{\mathbb{F}}_{p})\rightarrow {\rm H}^{i}(N,{\mathbb{F}}_{p})\rightarrow 0$$
with $A$ over $\mathbb{F}_{p}$, we obtain $A \otimes {\rm H}^{i}(N,{\mathbb{F}}_{p}) \cong \dfrac{A \otimes Z^{i}(N,{\mathbb{F}}_{p})}{A \otimes B^{i}(N,{\mathbb{F}}_{p})}$ as ${\mathbb{F}_{p}}G $-modules.  Therefore, $A \otimes {\rm H}^{i}(N,{\mathbb{F}}_{p}) \cong {\rm H}^{i}(N,A)$ as $\mathbb{F}_{p}G$-modules, which implies that, in particular, ${\rm H}^{i}(N,A)$ is a projective $\mathbb{F}_{p}G$-module.  

Next, consider the Lyndon-Hochschild-Serre spectral sequence $$\textrm{H}^{p_{0}}(\Gamma/N,\textrm{H}^{q_{0}}(N,A))\Rightarrow\textrm{H}^{p_{0}+q_{0}}(\Gamma,A).$$  Since ${\rm H}^{q_{0}}(N,A)$ is a projective $\mathbb{F}_{p}G$-module for all $q_{0} \geq 1$ by the above argument, and since ${\rm H}^{0}(N,A) \cong A^{N} \cong A$ which is also projective, the terms corresponding to $(p_{0},q_{0}) = (1,i-1), (2,i-2),...,(i,0)$ vanish for i=$p_{0} + q_{0} \geq 1$.  Therefore, ${\rm H}^i(\Gamma,A) \cong {\rm H}^0(\Gamma/N,H^i(N,A))$.
\end{proof}

\noindent
We are now ready to show the main result of the section.  
\vspace*{.05 in}

\noindent
\textit{Proof of Theorem 3.1}.  Recall that $V$ is assumed to be a projective $\mathbb{F}_pG$-module, where we identify $G$ with $\Gamma/N$.  By Proposition \ref{pr:no1}, $\charg{2}{N}{\textrm{Hom}_{\mathbb{F}_{p}}(V,V)}\cong \textrm{Hom}_{\mathbb{F}_{p}}(V,V)\otimes \charg{2}{N}{\mathbb{F}_p}$ as $\mathbb{F}_{p}{\Gamma/N}$-modules.

Consider the Kummer sequence $ 1\rightarrow \mu_{p} \xrightarrow{\iota} \mathbb{C}^{*}\xrightarrow{p} \mathbb{C}^{*}\rightarrow 1$, where $\mathbb{C}^{*}\xrightarrow{p} \mathbb{C}^{*}$ denotes the map given by $z \xrightarrow{p} z^p$. We consider this sequence as a sequence of $\mathbb{Z}N$-modules with trivial $N$-action.

Applying the functor $\textrm{Hom}_{\mathbb{Z}N}(\mathbb{Z},-)$ we obtain the long exact sequence

...$\xrightarrow{\delta} \charg{1}{N}{\mu_p}\xrightarrow{\iota_{*}} \charg{1}{N}{\mathbb{C^{*}}}\xrightarrow{p_{*}}\charg{1}{N}{\mathbb{C^{*}}}\xrightarrow{\delta} \charg{2}{N}{\mu_p} \xrightarrow{\iota_{*}}$
\vspace*{.1 in}

\hspace*{1.1 in}
$\charg{2}{N}{\mathbb{C^{*}}} \xrightarrow{p_{*}} \charg{2}{N}{\mathbb{C^{*}}} \xrightarrow{\delta} \charg{3}{N}{\mu_p} \xrightarrow{\iota_{*}} $...
\vspace*{.1 in}

\noindent
Since $N$ is elementary abelian, $\charg{i}{N}{\mathbb{C^{*}}} \xrightarrow{p_{*}} \charg{i}{N}{\mathbb{C^{*}}}$ is trivial, for $i \geq 1$.  Identifying $\mathbb{F}_p = \mu_p$, we get a short exact sequence of $\mathbb{F}_{p}{\Gamma/N}$-modules 

\hspace*{.5 in}
$0\rightarrow \charg{1}{N}{\mathbb{C^{*}}}\xrightarrow{\delta} \charg{2}{N}{\mathbb{F}_p} \xrightarrow{\iota_{*}} \charg{2}{N}{\mathbb{C^{*}}} \rightarrow 0$.
\vspace*{.1 in}

\noindent
Applying the functor $\textrm{Hom}_{\mathbb{F}_{p}}(V,V)\otimes -$   , and taking fixed points, we obtain, using Proposition \ref{pr:no1},

\vspace*{.1 in}
\noindent
$\charg{2}{\Gamma}{\textrm{Hom}_{\mathbb{F}_{p}}(V,V))\cong  [(\charg{1}{N}{\mathbb{C^{*}}}\otimes {\textrm{Hom}_{\mathbb{F}_{p}}(V,V)}}]^{\Gamma/N}\oplus [\charg{2}{N}{\mathbb{C^{*}}}\otimes {\textrm{Hom}}_{\mathbb{F}_{p}}(V,V)]^{\Gamma/N}$.

\vspace*{.1 in}
\noindent
Therefore, Theorem \ref{th:no1} follows once we show that $\charg{1}{N}{\mathbb{C^{*}}}\cong V_{\tilde{\phi}}$ and $\charg{2}{N}{\mathbb{C^{*}}} \cong V_{\tilde{\phi} \wedge \tilde{\phi}}$ as ${\mathbb{F}_{p}\Gamma/N}$- modules.

Since $N$ is an elementary abelian $p$-group which acts trivially on $\mathbb{C^{*}}, \charg{1}{N}{\mathbb{C^{*}}}=\textrm{Hom}(N,\mathbb{C^{*}})\cong \textrm{Hom}_{\mathbb{F}_{p}}(N,\mathbb{F}_{p})$ as $\mathbb{F}_{p}G$-modules, which implies $\charg{1}{N}{\mathbb{C}^*} \cong V_{\tilde{\phi}}$.  It remains to determine the $\Gamma/N$-module structure of $\charg{2}{N}{\mathbb{C^{*}}}$.  Our result follows after a quick computation, using that $\charg{2}{N}{\mathbb{C^{*}}} \cong N \wedge N$.  This completes the proof of Theorem 3.1.
\bigskip

\noindent
As a consequence of the proof of Theorem 3.1 we obtain the following result.
\begin{corollary}
\label{co:no1}
Under the general hypothesis of Theorem 3.1, we obtain:
\begin{enumerate}

\item[a.]
$\charg{1}{\Gamma}{{\rm Hom}_{\mathbb{F}_{p}}(V,V)} \cong (V_{\tilde{\phi}}\otimes V^{*}\otimes V)^{\Gamma/N} $.

\item[b.]
$\charg{1}{\Gamma}{{\rm Hom}_{\mathbb{F}_{p}}(V,V)}$ is a summand of $\charg{2}{\Gamma}{{\rm Hom}_{\mathbb{F}_{p}}(V,V)}$.

\item[c.]
${\rm dim}_{\mathbb{F}_{p}}(\charg{1}{\Gamma}{{\rm Hom}_{\mathbb{F}_{p}}(V,V))} \leq {\rm dim}_{\mathbb{F}_{p}}(\charg{2}{\Gamma}{{\rm Hom}_{\mathbb{F}_{p}}(V,V))}$.
\end{enumerate}
\end{corollary}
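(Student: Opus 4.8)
The plan is to read off all three parts directly from the computation already carried out in the proof of Theorem~\ref{th:no1}. For part (a) I would apply Proposition~\ref{pr:no1} in degree $i=1$ with $A=\mathrm{Hom}_{\mathbb{F}_p}(V,V)$; this is legitimate because $V$ is a projective $\mathbb{F}_pG$-module, hence so is $V^{*}\otimes V\cong\mathrm{Hom}_{\mathbb{F}_p}(V,V)$. Proposition~\ref{pr:no1} then gives $\charg{1}{\Gamma}{\mathrm{Hom}_{\mathbb{F}_p}(V,V)}\cong[\mathrm{Hom}_{\mathbb{F}_p}(V,V)\otimes\charg{1}{N}{\mathbb{F}_p}]^{\Gamma/N}$. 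As shown in the proof of Theorem~\ref{th:no1}, $\charg{1}{N}{\mathbb{F}_p}\cong\charg{1}{N}{\mathbb{C}^{*}}\cong\mathrm{Hom}_{\mathbb{F}_p}(N,\mathbb{F}_p)\cong V_{\tilde{\phi}}$ as $\mathbb{F}_p\Gamma/N$-modules; substituting this together with $\mathrm{Hom}_{\mathbb{F}_p}(V,V)\cong V^{*}\otimes V$ yields part (a).

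For part (b) I would recall that in the proof of Theorem~\ref{th:no1} the short exact sequence of $\mathbb{F}_p\Gamma/N$-modules $0\to\charg{1}{N}{\mathbb{C}^{*}}\to\charg{2}{N}{\mathbb{F}_p}\to\charg{2}{N}{\mathbb{C}^{*}}\to 0$ was tensored with $\mathrm{Hom}_{\mathbb{F}_p}(V,V)$ over $\mathbb{F}_p$ and $\Gamma/N$-fixed points were then taken. Tensoring over the field $\mathbb{F}_p$ keeps the sequence exact, and each of the three resulting terms is a projective $\mathbb{F}_pG$-module (the tensor product over $\mathbb{F}_p$ of any $\mathbb{F}_pG$-module with the projective module $V^{*}\otimes V$ is projective), so the sequence of $\mathbb{F}_pG$-modules splits. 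Applying the additive functor $(-)^{\Gamma/N}$ and invoking $\charg{2}{\Gamma}{\mathrm{Hom}_{\mathbb{F}_p}(V,V)}\cong[\charg{2}{N}{\mathbb{F}_p}\otimes\mathrm{Hom}_{\mathbb{F}_p}(V,V)]^{\Gamma/N}$ from Proposition~\ref{pr:no1} gives the decomposition
\[
\charg{2}{\Gamma}{\mathrm{Hom}_{\mathbb{F}_p}(V,V)}\cong[\charg{1}{N}{\mathbb{C}^{*}}\otimes\mathrm{Hom}_{\mathbb{F}_p}(V,V)]^{\Gamma/N}\oplus[\charg{2}{N}{\mathbb{C}^{*}}\otimes\mathrm{Hom}_{\mathbb{F}_p}(V,V)]^{\Gamma/N}.
\]
By part (a) the first summand is isomorphic to $\charg{1}{\Gamma}{\mathrm{Hom}_{\mathbb{F}_p}(V,V)}$, which is precisely the assertion of part (b). Part (c) is then immediate, since a direct summand of a finite-dimensional $\mathbb{F}_p$-vector space has dimension at most that of the ambient space.

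The whole argument is bookkeeping layered on the proof of Theorem~\ref{th:no1}, so I do not anticipate a serious obstacle; the one point worth spelling out is the splitting of the short exact sequence after tensoring with $\mathrm{Hom}_{\mathbb{F}_p}(V,V)$. This rests on two standard facts about the group algebra $\mathbb{F}_pG$: that $V$ projective implies $V^{*}\otimes V$ projective, and that the tensor product over $\mathbb{F}_p$ of a projective $\mathbb{F}_pG$-module with an arbitrary $\mathbb{F}_pG$-module is again projective. Together these force all three terms of the tensored sequence to be projective, and a short exact sequence of projective modules splits; once that is in place, everything else follows by applying Proposition~\ref{pr:no1} and comparing dimensions.
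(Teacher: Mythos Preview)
Your proposal is correct and follows essentially the same approach as the paper: the paper gives no separate proof of this corollary, merely stating that it is a consequence of the proof of Theorem~\ref{th:no1}, and you have faithfully unpacked that consequence by applying Proposition~\ref{pr:no1} in degree~$1$, identifying $\charg{1}{N}{\mathbb{F}_p}\cong V_{\tilde\phi}$, and making explicit the splitting that the paper invokes when it writes the direct-sum decomposition of $\charg{2}{\Gamma}{\mathrm{Hom}_{\mathbb{F}_p}(V,V)}$. The only added detail on your end is the justification via projectivity of $V^{*}\otimes V$ for why the tensored Kummer sequence splits, which the paper leaves tacit.
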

\bigskip

\noindent
For the remainder of the paper, we consider the special case 
$$0\rightarrow\\N\rightarrow\Gamma\rightarrow G = \Gamma/N\rightarrow 1$$ where $\mathbb{F}_{p}G$ is semisimple, $\mathbb{F}_{p}$ is sufficiently large for $G$, and $V$ is an irreducible $\mathbb{F}_{p}G$-module. As before, let $\phi$ denote the action of $G$ on $N$.  
\begin{corollary}
\label{co:center}
Assume the notation of the previous paragraph, and let $\phi$ be irreducible.  Suppose there exists an absolutely irreducible $\mathbb{F}_{p}\Gamma$-module $V_0$ with universal deformation ring $R(\Gamma,V_0) \ncong {\mathbb{Z}}_p$.  Then, the restriction of $\phi$ to the center of $G$ is trivial.
\end{corollary}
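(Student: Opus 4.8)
The plan is to deduce from $R(\Gamma,V_0)\ncong\mathbb{Z}_p$ that $\charg{1}{\Gamma}{\mathrm{Hom}_{\mathbb{F}_p}(V_0,V_0)}\neq 0$, to rewrite this group using Corollary~\ref{co:no1}(a), and then to read off the triviality of $\phi$ on the center of $G$ from a central-character computation. First I would check that $V_0$ is inflated from $G$: since $N$ is a normal $p$-subgroup of $\Gamma$ and $V_0\neq 0$, the fixed space $V_0^{N}$ is a nonzero $\mathbb{F}_p\Gamma$-submodule, hence equals $V_0$, so $N$ acts trivially and $V_0$ comes from an absolutely irreducible $\mathbb{F}_pG$-module, which is projective because $\mathbb{F}_pG$ is semisimple. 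In particular Corollary~\ref{co:no1}(a) applies to $V_0$. Moreover, since $p\nmid|G|$, idempotents of $\mathbb{F}_pG$ lift to $\mathbb{Z}_pG$, so the underlying projective $\mathbb{F}_pG$-module of $V_0$ lifts to a $\mathbb{Z}_pG$-module that is free over $\mathbb{Z}_p$; inflating to $\Gamma$ produces a lift of $V_0$ over $\mathbb{Z}_p$, hence a morphism $R(\Gamma,V_0)\to\mathbb{Z}_p$ in $\hat{\mathcal{C}}$, which is automatically surjective since it is a $\mathbb{Z}_p$-algebra map onto $\mathbb{Z}_p$. Now if $d^1_{V_0}=0$, then by Theorem~\ref{thm:udr} the ring $R(\Gamma,V_0)$ is a quotient of $\mathbb{Z}_p$; but a ring quotient of $\mathbb{Z}_p$ that admits a surjective ring homomorphism onto $\mathbb{Z}_p$ must be isomorphic to $\mathbb{Z}_p$, contradicting the hypothesis. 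Therefore $d^1_{V_0}\geq 1$, and Corollary~\ref{co:no1}(a) gives $(V_{\tilde\phi}\otimes V_0^{*}\otimes V_0)^{\Gamma/N}\neq 0$.

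It remains to extract the statement about $Z(G)$, where I write $G$ for $\Gamma/N$. Since $V_0$ is absolutely irreducible, $Z(G)$ acts on it by a scalar character $\omega\colon Z(G)\to\mathbb{F}_p^{\times}$, hence by $\omega^{-1}$ on $V_0^{*}$ and therefore trivially on $V_0^{*}\otimes V_0=\mathrm{Hom}_{\mathbb{F}_p}(V_0,V_0)$. Consequently $Z(G)$ acts on $V_{\tilde\phi}\otimes V_0^{*}\otimes V_0$ only through the first factor, so that
$$0\neq (V_{\tilde\phi}\otimes V_0^{*}\otimes V_0)^{G}\subseteq (V_{\tilde\phi}\otimes V_0^{*}\otimes V_0)^{Z(G)}=(V_{\tilde\phi})^{Z(G)}\otimes(V_0^{*}\otimes V_0),$$
which forces $(V_{\tilde\phi})^{Z(G)}\neq 0$. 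Finally, since $\phi$ is irreducible and $\mathbb{F}_p$ is sufficiently large for $G$, $Z(G)$ acts on $N$ via $\phi$ by a scalar character $\psi$, hence acts on $V_{\tilde\phi}$ by $\psi^{-1}$; the existence of a nonzero fixed vector forces $\psi^{-1}=1$, i.e.\ $\phi|_{Z(G)}$ is trivial, as claimed.

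The only delicate point is the reduction in the first paragraph: one must be sure that $R(\Gamma,V_0)\ncong\mathbb{Z}_p$ really yields $d^1_{V_0}\geq 1$, and not merely that $R(\Gamma,V_0)$ is some proper (possibly finite) quotient of $\mathbb{Z}_p$; this is precisely where the characteristic-zero lift of $V_0$ coming from $p\nmid|G|$ is needed. The remaining steps are a routine combination of Corollary~\ref{co:no1}(a) with Schur's lemma applied to the central characters of $V_0$ and of $\phi$.
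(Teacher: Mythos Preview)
Your proof is correct and follows essentially the same route as the paper's: both deduce $d^1_{V_0}\geq 1$ from $R(\Gamma,V_0)\ncong\mathbb{Z}_p$ by combining Theorem~\ref{thm:udr} with the existence of a lift of $V_0$ to $\mathbb{Z}_p$, then invoke Corollary~\ref{co:no1}(a) and the fact that $Z(G)$ acts trivially on $V_0^{*}\otimes V_0$ by Schur's lemma. The only cosmetic difference is in the final step: the paper observes that $d^1_{V_0}\geq 1$ forces $V_\phi$ to occur as a summand of $V_0^{*}\otimes V_0$, whence $Z(G)$ acts trivially on $V_\phi$ directly; you instead pass to $(V_{\tilde\phi})^{Z(G)}\neq 0$ and use the central character of $\phi$. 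These are dual formulations of the same conclusion, and your extra care in verifying that $V_0$ is inflated from $G$ is a detail the paper leaves implicit from the standing hypotheses.
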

\begin{proof}
Let $V_0$ be as in the statement of the corollary.  By Theorem 2.1, $$R(\Gamma,V_0) \cong \mathbb{Z}_p[[t_1,\ldots,t_r]]/J$$ where $r = d^1_{V_0}$, and $d^2_{V_0}$ is an upper bound on the minimal number of generators for $J$.  Since $V_0$ is a projective $\mathbb{F}_{p}G$-module, it has a lift over $\mathbb{Z}_p$.  Because $R(\Gamma,V_0) \ncong \mathbb{Z}_p$, it follows that $d^1_{V_0} \geq 1$.  We now use Corollary \ref{co:no1} for $V = V_0$.  Since we assume $\mathbb{F}_{p}G$ is semisimple, the $\mathbb{F}_{p}$-dimension of the $G$-fixed points of any $\mathbb{F}_{p}G$-module is the multiplicity of the trivial simple $\mathbb{F}_{p}G$-module as a summand.  Recall that we identify $G = \Gamma/N$.  By Corollary \ref{co:no1}, $d^1_{V_0} \geq 1$ implies that $V_{\phi}$ occurs as a summand of the module $V_0^* \otimes V_0$ with the adjoint $G$-action.  Since $V_0$ is absolutely irreducible, the action of an element $z \in Z(G)$ on $V_0^* \otimes V_0$ is given by conjugation with a scalar matrix.  Hence $z$ acts trivially on $V_{\phi}$.
\end{proof}
Our main goal is to relate the universal deformation rings $R(\Gamma,V)$ and the cohomology groups $\charg{i}{\Gamma}{\textrm{Hom}_{\mathbb{F}_{p}}(V,V)}$ for $i$ = 1,2, to the fusion of $N$ in $\Gamma$.
\vspace*{.1 in}

\noindent
We need the following definitions.

\begin{definition}
\label{df:no1}
Let $N, \Gamma, G, \phi$ be as above.
\begin{enumerate}
\item[a.]
For every irreducible $\mathbb{F}_{p}G$-module $V$, let $d_{V}^i$ = ${\rm dim}_{\mathbb{F}_{p}}(\charg{i}{\Gamma}{{\rm Hom}_{\mathbb{F}_{p}}(V,V)}$ for i=1,2.  Note that this number depends on $\phi$.  We say an irreducible $\mathbb{F}_{p}G$-module $V_0$ is cohomologically maximal for $\phi$ if $d_{V_0}^2$ is maximal among all $d_V^2$.  We say an irreducible representation $\rho$ of $G$ over $\mathbb{F}_p$ is cohomologically maximal for $\phi$ if $\rho$ corresponds to an $\mathbb{F}_{p}G$-module with this property.

\item[b.]
We call the orbits of the action $\phi$ of $G$ on $N$ the fusion orbits of $\phi$.  For all $m\geq 1$, let $F_{\phi,m}$ be the number of fusion orbits of $\phi$ with cardinality $m$.  Then, the sequence $\{F_{\phi,m}\}_{m\geq 1}$ is called the fusion numbers of $\phi$.  
\end{enumerate}
\end{definition}

Note that the fusion of $N \textrm{ in } \Gamma$ is uniquely determined by the fusion orbits of $\phi$, since two elements in $N$ are conjugate in $\Gamma$ if and only if they lie in the same fusion orbit of $\phi$.  
\section{Dihedral Groups}
\label{s:dih}
\subsection{Main Results}
\label{ss:main}
In this section, we consider the case when $\ell = 2, n \geq 3$ and $\Gamma/N$ = $G$ is the dihedral group $D_{2n}$ of order $2n$.  That is, we have a short exact sequence of groups
$$0\rightarrow\\N\rightarrow\Gamma\rightarrow G = \Gamma/N\rightarrow 1$$
where $G$ is dihedral and $N$ is an elementary abelian $p$-group of rank two.  Moreover, we assume $\mathbb{F}_{p}G$ is semisimple and $\mathbb{F}_p$ is sufficiently large for $G$.  Again, we let $\phi$ denote the action of $G$ on $N$; and we assume $\phi$ is irreducible.  Our main results, Theorems \ref{th:no2} and \ref{th:no3}, show how the first and second cohomology groups, respectively the universal deformation rings, associated to certain ${\mathbb{F}_p}\Gamma$-modules $V$, can detect the fusion of $N$ in $\Gamma$, i.e. the fusion of $\phi$.  In particular, we will prove Theorem 1.1.

Since $N$ is a $p$-group, every irreducible $\mathbb{F}_p\Gamma$-module is inflated from an irreducible $\mathbb{F}_pG$-module.  Let $\textrm{Rep}_2(G)$ be a complete set of representatives of isomorphism classes of all 2-dimensional representations of $G$ over $\mathbb{F}_p$.  Let $\textrm{Irr}_2(G) \subset \textrm{Rep}_2(G)$ be the subset of isomorphism classes of irreducible 2-dimensional representations.  For $\rho$ in $\textrm{Irr}_2(G)$, let $V_\rho$ be an irreducible ${\mathbb{F}}_{p}G$-module with representation $\rho$. 

We let $n \geq 3$, and consider the standard presentation for $G$ = $D_{2n}$, given by $\langle r,s | r^n, s^2, srs^{-1}r \rangle.$  Moreover, we assume $p \equiv 1 (\textrm{mod } n)$.  Recall that all isomorphism classes of 2-dimensional irreducible representations of $G$ over $\mathbb{F}_p$ are represented by:

$$r\xrightarrow{\theta_{i}} \begin{pmatrix} \omega^{i}&0\\ 0&\omega^{-i} \end{pmatrix} \hspace*{.2 in}
s\xrightarrow{\theta_{i}} \begin{pmatrix} 0&1\\   1&0 \end{pmatrix}$$

\noindent
for $1 \leq i < \frac{n}{2}$, and $\omega$ a primitve $n$th root of unity in $\mathbb{F}_p^*$.  Note that $\theta_i = \mathrm{Ind}_{\langle r \rangle}^G(\chi_i)$, where $\chi_i$ is the one-dimensional representation of $\langle r \rangle$ with $\chi_i(r) = \omega^i$.  For our discussion on dihedral groups $G$, we fix the basis corresponding to the matrices above.

\vspace*{.1 in}


\noindent
\begin{definition}
\label{df:no2}
Define the set map $T: \mathrm{Irr}_{2}(G) \rightarrow \mathrm{Rep}_{2}(G)$ by $T(\theta_i) = T(\mathrm{Ind}_{\langle r \rangle}^G(\chi_i))$ = $\mathrm{Ind}_{\langle r \rangle}^G({\chi}_{i}^2)$.  

If $n$ is odd, let $\Omega$ = $\mathrm{Irr}_2(G)$ = $T(\mathrm{Irr}_2(G))$.  If $n$ is even, let $\Omega$ = $\mathrm{Irr}_2(G) \cap T(\mathrm{Irr}_2(G))$.  In the latter case, for all $\psi$ in $\Omega$, $\mid T^{-1}(\psi)\mid$ = 2.  

Note that $\Omega$ consists precisely of those representations in $\mathrm{Irr}_2(G)$ whose restriction to the center of $G$ is trivial.
\end{definition} 

\begin{theorem}
\label{th:no2}
If $\phi \in \Omega$, then the fusion of $\phi$ is uniquely determined by the set $\{ker(\rho) : \rho \in \mathrm{Irr}_2(G)$ is cohomologically maximal for $\phi \}$ = $\{ker(\rho) : \rho \in \mathrm{Irr}_2(G)$ with $R(\Gamma,V_{\rho}) \ncong \mathbb{Z}_p\}$.
\end{theorem}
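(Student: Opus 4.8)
The plan is to compute, for each $\rho = \theta_i \in \mathrm{Irr}_2(G)$, the dimension $d^2_{V_\rho}$ using Theorem~\ref{th:no1} applied with $V = V_\rho$ and $\phi$ the fixed action, so that $d^2_{V_\rho} = \dim_{\mathbb{F}_p}\bigl[(V_{\tilde\phi}\otimes V_\rho^*\otimes V_\rho)\oplus(V_{\tilde\phi\wedge\tilde\phi}\otimes V_\rho^*\otimes V_\rho)\bigr]^{G}$. Since $\mathbb{F}_pG$ is semisimple and sufficiently large, each summand's fixed-point dimension is the multiplicity of the trivial module, i.e. (by self-duality of $\mathbb{F}_pG$-modules over a splitting field, or rather by Frobenius reciprocity on characters) the multiplicity $\langle \chi_{\tilde\phi}\cdot\chi_{\rho^*\otimes\rho},\,1\rangle$ and $\langle \chi_{\det\tilde\phi}\cdot\chi_{\rho^*\otimes\rho},\,1\rangle$. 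Because $\phi\in\Omega$, $\phi = \theta_k$ for some $k$ with $\theta_k = \mathrm{Ind}_{\langle r\rangle}^G(\chi_k)$, and $\rho^*\otimes\rho = \mathrm{End}(V_{\theta_i})$ decomposes, via Mackey/the explicit matrices, as $\mathbf{1}\oplus\epsilon\oplus\theta_{2i}$ where $\epsilon$ is the sign character of $D_{2n}$ (and $\theta_{2i}$ is interpreted through the identifications $\theta_j\cong\theta_{-j}$, $\theta_0 = \mathbf{1}\oplus\epsilon$, $\theta_{n/2}$ splitting into the two "other" linear characters when $n$ is even). So I would first establish this decomposition of $\mathrm{End}(V_{\theta_i})$ explicitly, then pair it against $\chi_{\theta_k}$ and against $\chi_{\det\theta_k} = \chi_\epsilon$ (the determinant of $\theta_k$ is the sign character since $\det\begin{pmatrix}0&1\\1&0\end{pmatrix} = -1$ and $\det$ of the diagonal part is $1$).

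The outcome of that bookkeeping is a clean combinatorial criterion: $\theta_k$ is a constituent of $\mathrm{End}(V_{\theta_i})$ iff $k\equiv\pm 2i\pmod n$, and $\epsilon$ is always a constituent with multiplicity one; so $d^1_{V_{\theta_i}} = \langle\theta_k,\mathrm{End}(V_{\theta_i})\rangle$ is $1$ precisely when $2i\equiv\pm k\pmod n$ (and otherwise $0$, assuming $\theta_k$ is genuinely two-dimensional, which it is since $\phi$ is irreducible), while $d^2_{V_{\theta_i}} = d^1_{V_{\theta_i}} + \langle\epsilon,\mathrm{End}(V_{\theta_i})\rangle = d^1_{V_{\theta_i}} + 1$. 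Hence the cohomologically maximal $\rho$ (those maximizing $d^2$) are exactly the $\theta_i$ with $2i\equiv\pm k\pmod n$, which is also exactly the set with $d^1_{V_{\theta_i}}\geq 1$, i.e. (invoking Theorem~\ref{thm:udr} together with the fact that $V_\rho$ is projective over $\mathbb{F}_pG$ hence liftable to $\mathbb{Z}_p$, as in the proof of Corollary~\ref{co:center}) exactly the $\theta_i$ with $R(\Gamma,V_{\theta_i})\ncong\mathbb{Z}_p$. This proves the asserted equality of the two sets in the statement; what remains is to recover the fusion of $\phi$ from $\{\ker\theta_i : 2i\equiv\pm k\pmod n\}$.

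For the final and genuinely substantive step, I would translate everything to the arithmetic of $\mathbb{Z}/n$. The fusion orbits of $\phi = \theta_k$ on $N\setminus\{0\}$ are governed by the subgroup of $\mathbb{F}_p^*$ generated by $\omega^k$ together with the inversion $x\mapsto x^{-1}$ coming from $s$: concretely, an orbit on the "eigenline" coordinates has size depending on $\gcd(k,n)$, and the full fusion numbers $\{F_{\phi,m}\}$ are determined by $d := \gcd(k,n)$ (the orbits of $\langle r\rangle$ on the nonzero vectors have size $n/d$, and $s$ either merges pairs of them or fixes them). On the other hand $\ker\theta_i$ is the normal subgroup $\langle r^{n/\gcd(i,n)}\rangle$ of $\langle r\rangle$, so the set $\{\ker\theta_i : 2i\equiv\pm k\pmod n\}$ records exactly the set of values $\gcd(i,n)$ as $i$ ranges over solutions of $2i\equiv\pm k\pmod n$. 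The core lemma is therefore: the multiset $\{\gcd(i,n) : 2i\equiv\pm k\pmod n\}$ determines $\gcd(k,n)$, hence the fusion. I expect this number-theoretic lemma — controlling how $\gcd(k,n)$ is pinned down by the gcds of half-solutions mod $n$ — to be the main obstacle, and I would handle it by casework on whether $k$ and $n$ are even or odd (if $k$ is odd the congruence $2i\equiv\pm k$ is only solvable when $n$ is odd, in which case $2$ is invertible mod $n$ and $i$ is uniquely $\pm k/2$, giving $\gcd(i,n) = \gcd(k,n)$ directly; if $k$ is even one works with $n' = n/2$ or $n/\gcd(2,n)$ and tracks the two solution classes), reducing each case to an elementary divisibility argument. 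The abelian-$G$ comparison in Section~4.6 and part~(b) of Theorem~1.1 (where all $R(\Gamma,V)\cong\mathbb{Z}_p$ and one must instead read fusion off of which representations have trivial central character, forcing the "$n$ a power of $2$ or $2q$" restriction) would be proved by the same dictionary but with the cohomological data replaced by the bare list $\Omega$ of representations trivial on $Z(G)$.
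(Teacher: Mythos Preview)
Your proposal is correct and follows essentially the same route as the paper: the decomposition $V_{\theta_i}^*\otimes V_{\theta_i}\cong \mathbf{1}\oplus\epsilon\oplus\theta_{2i}$ is the paper's Lemma~\ref{le:no1}, your computation $d^2_{V_{\theta_i}}=d^1_{V_{\theta_i}}+1$ with $d^1=1$ iff $T(\theta_i)=\phi$ is Lemma~\ref{le:no2}, the fusion-via-$\gcd(k,n)$ step is Corollary~4.9, and your final arithmetic lemma (that the gcds of the half-solutions of $2i\equiv\pm k$ determine $\gcd(k,n)$) is the content of Lemma~4.10, proved there by the same parity/2-adic casework you outline. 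The one place you streamline the paper is in establishing the equality of the two sets: you argue directly from $d^1_V=\dim\,\mathfrak{m}_R/(\mathfrak{m}_R^2+pR)$ that $d^1_V\ge 1\Leftrightarrow R(\Gamma,V)\ncong\mathbb{Z}_p$, whereas the paper goes further and explicitly identifies $R(\Gamma,V)\cong\mathbb{Z}_p[[t]]/(t^2,pt)$ in the maximal case via \cite{bleher-chinburg-desmit} (Proposition~\ref{pr:no3}); your shortcut is perfectly sufficient for Theorem~\ref{th:no2} as stated.
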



\begin{theorem}
\label{th:no3}
Let $G$ = $D_{2n}$.  Let $\textrm{T}$ and $\Omega$ be as above.
\begin{enumerate}
\item[a.]
Let $n$ be arbitrary and let $\phi$ be in $\Omega$.  Then, for any $\psi$ in $\mathrm{Irr}_2(G)$, $\psi$ is cohomologically maximal for $\phi$ if and only if $\textrm{T}(\psi) = \phi$.
\item[b.]
Let $n$ be odd, $\phi_{1}, \phi_{2} \in \mathrm{Irr}_2(G) = \Omega$.  Then $\phi_{1}$ and $\phi_{2}$ have the same fusion if and only if $\textrm{T}^{-1}(\phi_{1}) \textrm{and } T^{-1}(\phi_{2})$ have the same kernel.
\item[c.]
Let $n$ be even, $\phi_{1}, \phi_{2} \in \Omega$.  Then $\phi_{1}$ and $\phi_{2}$ have the same fusion if and only if $\{\textrm{kernel of } \psi: \psi \in \textrm{T}^{-1}(\phi_{1})\}=\{\textrm{kernel of } \psi: \psi \in \textrm{T}^{-1}(\phi_{2})\}$.

\end{enumerate}
\end{theorem}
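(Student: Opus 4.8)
The plan is to reduce Theorem~\ref{th:no3} to two ingredients: the $\mathbb{F}_pG$-module structure of $V_\rho^*\otimes V_\rho\cong\mathrm{Hom}_{\mathbb{F}_p}(V_\rho,V_\rho)$, fed into Theorem~\ref{th:no1} and Corollary~\ref{co:no1}; and elementary facts about greatest common divisors in the cyclic group $\langle r\rangle$. I would first record the module computation. Write $\rho=\theta_j=\mathrm{Ind}_{\langle r\rangle}^G(\chi_j)$. Every $\theta_j$ is self-dual, since the contragredient of $\chi_j$ is $\chi_{-j}$ and $\chi_{-j}$ is the $s$-conjugate of $\chi_j$; combining this with Mackey's formula $\mathrm{Res}^G_{\langle r\rangle}\mathrm{Ind}^G_{\langle r\rangle}(\chi_j)\cong\chi_j\oplus\chi_{-j}$ and the projection formula yields
$$V_\rho^*\otimes V_\rho\;\cong\;\mathrm{Ind}_{\langle r\rangle}^G(\chi_0)\;\oplus\;\mathrm{Ind}_{\langle r\rangle}^G(\chi_j^2)\;\cong\;\mathbf{1}\oplus\epsilon\oplus W_{T(\rho)},$$
where $\chi_0$ is the trivial character, $\mathbf{1}$ the trivial $\mathbb{F}_pG$-module, $\epsilon$ the one-dimensional representation $r\mapsto 1$, $s\mapsto -1$ (the nontrivial constituent of the permutation module $\mathbb{F}_p[G/\langle r\rangle]$), and $W_{T(\rho)}$ the module underlying $T(\rho)=\mathrm{Ind}_{\langle r\rangle}^G(\chi_j^2)$. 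For the action $\phi=\theta_i$ one similarly has $\tilde\phi\cong\theta_i$ and $\tilde\phi\wedge\tilde\phi=\det\circ\tilde\phi=\epsilon$ for \emph{every} $i$.

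Next I would substitute this into Corollary~\ref{co:no1}(a) and Theorem~\ref{th:no1}. Using that $\mathbb{F}_pG$ is semisimple---so that $(A\otimes B)^{G}\cong\mathrm{Hom}_{\mathbb{F}_pG}(A^*,B)$ and multiplicities may be counted---and that $\phi=\theta_i$ is $2$-dimensional and irreducible, hence occurs in neither $\mathbf{1}$ nor $\epsilon$ and occurs in the multiplicity-free module $W_{T(\rho)}$ exactly when $W_{T(\rho)}\cong\phi$, one gets
$$d^1_{V_\rho}=\langle\phi,W_{T(\rho)}\rangle_{G}=\begin{cases}1,&\text{if }T(\rho)\cong\phi,\\0,&\text{otherwise,}\end{cases}\qquad d^2_{V_\rho}=d^1_{V_\rho}+1,$$
the last equality because $\epsilon$ occurs in $V_\rho^*\otimes V_\rho$ with multiplicity exactly $1$ and never inside $W_{T(\rho)}$ (each constituent of $W_{T(\rho)}$ restricts nontrivially to $\langle r\rangle$). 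Thus $d^2_{V_\rho}\in\{1,2\}$, with value $2$ exactly when $T(\rho)\cong\phi$; and since $\phi\in\Omega\subseteq T(\mathrm{Irr}_2(G))$ some $\rho$ has $T(\rho)=\phi$, so $2$ is attained. This is part~(a). It also yields the set equality asserted in Theorem~\ref{th:no2}: $V_\rho$ is projective as an $\mathbb{F}_pG$-module, hence lifts over $\mathbb{Z}_p$, so Theorem~\ref{thm:udr} gives $R(\Gamma,V_\rho)\cong\mathbb{Z}_p$ iff $d^1_{V_\rho}=0$ iff $T(\rho)\neq\phi$.

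For parts~(b) and~(c) I would pass to arithmetic. First, $\ker\theta_j$ is the unique subgroup of $\langle r\rangle$ of order $\gcd(n,j)$, since no reflection lies in $\ker\theta_j$ and $\ker\theta_j\cap\langle r\rangle=\ker\chi_j$. Second, the fusion orbits of $\phi=\theta_i$ are the orbits of $\phi(G)\le\mathrm{GL}(N)$, and $\phi(G)$ is generated by $\mathrm{diag}(\omega^{i},\omega^{-i})$, of order $m_i:=n/\gcd(n,i)$, together with the coordinate swap; hence $\phi(G)$---and so the orbit partition of $N$, equivalently the fusion numbers $\{F_{\phi,k}\}_{k\geq 1}$, from which $m_i$ is the smallest $k\geq 2$ with $F_{\phi,k}\neq 0$ (note $m_i\geq 3$)---depends only on $\gcd(n,i)$ and conversely determines it. Thus $\phi_1$ and $\phi_2$ have the same fusion iff $\gcd(n,i_1)=\gcd(n,i_2)$. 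When $n$ is odd, multiplication by $2$ is a bijection of $\mathbb{Z}/n\mathbb{Z}$ commuting with negation, so $T$ is a bijection of $\mathrm{Irr}_2(G)$ and $T^{-1}(\theta_i)=\{\theta_j\}$ with $2j\equiv i\pmod n$, whence $\gcd(n,j)=\gcd(n,2j)=\gcd(n,i)$; part~(b) is then immediate.

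When $n$ is even and $\phi=\theta_{2a}\in\Omega$, one has $T^{-1}(\phi)=\{\theta_a,\theta_{a+n/2}\}$, and the crux of part~(c) is computing $\gcd(n,a)$ and $\gcd(n,a+n/2)$ in terms of $g:=\gcd(n,i)=\gcd(n,2a)$. Writing $v_2$ for the $2$-adic valuation, the expected outcome is: if $v_2(g)<v_2(n)$ then $\gcd(n,a)=\gcd(n,a+n/2)=g/2$, while if $v_2(g)=v_2(n)$ then $\{\gcd(n,a),\gcd(n,a+n/2)\}=\{g/2,\,g\}$---both using only that $\gcd(c,m)=\gcd(b,m)$ whenever $b+m=2^uc$ with $b,m$ odd. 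In either case $\{\ker\psi:\psi\in T^{-1}(\phi)\}$ is a function of $g$ and determines $g$, so ``same kernel set'' is equivalent to ``same $g$,'' hence to ``same fusion,'' which is part~(c). I expect this $2$-adic case analysis to be the main obstacle: it is elementary but must be done carefully, as it is exactly where the even case departs from the odd one and where one must keep track of which preimage of $\phi$ under $T$ contributes each kernel.
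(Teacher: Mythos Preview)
Your proposal is correct and follows essentially the same route as the paper: decompose $V_\rho^*\otimes V_\rho$ (you via Mackey and the projection formula, the paper via explicit matrices in Lemma~\ref{le:no1}), feed this into Theorem~\ref{th:no1} and Corollary~\ref{co:no1} to obtain $d^1_{V_\rho}$ and $d^2_{V_\rho}=d^1_{V_\rho}+1$ (Lemma~\ref{le:no2} and Proposition~\ref{pr:no2}), reduce fusion to $\gcd(i,n)$ (the paper does this by listing orbits explicitly in Proposition~4.8 and Corollary~4.9), and then carry out the even-case gcd arithmetic (your $2$-adic case split is exactly the content of the paper's Lemma~4.10, just in different notation). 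The only differences are presentational; in particular your $\{\theta_a,\theta_{a+n/2}\}$ is the paper's $\{\theta_{d_0},\theta_{n/2-d_0}\}$ after the identification $\theta_j\cong\theta_{n-j}$.
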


Theorems \ref{th:no2} and \ref{th:no3} say that for $\phi$ in $\Omega$, the fusion of $N$ in $\Gamma$ can be detected by the cohomology groups, respectively the universal deformation rings, in the following sense.  Given $\phi$ in $\Omega$, we may determine the irreducible representations $\psi$ such that $\psi$ is cohomologically maximal for $\phi$.  Additionally, this assignment is reversible.  That is, given a collection of irreducible representations that are cohomologically maximal for some $\phi$ in $\Omega$, we may determine $\phi$.  Moreover, given only the fusion of $\phi$ in $\Omega$ we can determine the kernels of the representations that are cohomologically maximal for $\phi$.  Analagously, this assignment is again reversible.  In addition, since $\psi$ is cohomologically maximal for $\phi$ in $\Omega$ if and only if $R(\Gamma,V_{\psi}) \ncong \mathbb{Z}_{p}$, the fusion of $N$ in $\Gamma$ can also be determined by the knowledge of the universal deformation rings.

Thus, for $\phi$ in $\Omega$ we have the following one-to-one correspondences:
\vspace*{.1 in}
\begin{align*}
\phi &\leftrightsquigarrow \{\psi \in \mathrm{Irr}_{2}(G): \psi \textrm{ is cohomologically maximal for } \phi\} \\   \phi &\leftrightsquigarrow \{\psi \in \mathrm{Irr}_{2}(G): R(\Gamma,V_{\psi}) \ncong \mathbb{Z}_p \}.
\end{align*}
\begin{align*}
\textrm{ Fusion of } \phi &\leftrightsquigarrow \{\textrm{ker}(\psi) : \psi \in \mathrm{Irr}_{2}(G)\textrm{ is cohomologically maximal for } \phi\} \\  \textrm{ Fusion of } \phi &\leftrightsquigarrow \{\textrm{ker}(\psi) : \psi \in \mathrm{Irr}_{2}(G) \textrm{ and }R(\Gamma,V_{\psi}) \ncong \mathbb{Z}_p \}.
\end{align*}

Theorem 1.1 says that even if $\phi$ is not in $\Omega$, knowledge of all $R(\Gamma,V)$ may still be enough to determine the fusion of $N$ in $\Gamma$.  For a generic choice of $n$, however, $\Omega$ is precisely the set of isomorphism classes of representations for which fusion may be determined.  In subsections 4.2-4.5, we prove our main results.  In subsection 4.6, we briefly discuss the case when $G$ = $\Gamma/N$ is an abelian group and compare this case to the dihedral case.
\subsection{Cohomology for $\bf{D_{2n}}$}
\label{ss:cohfordih}
\noindent
In this subsection we determine $\charg{2}{\Gamma}{\textrm{Hom}_{\mathbb{F}_{p}}(V_{\psi},V_{\psi})}$ for $\phi$ in $\Omega$ and $\psi$ in $\textrm{Irr}_2(G)$.  We make the same assumptions as before.  In particular, $n \geq 3$ and $p \equiv 1 (\textrm{mod }n)$, which means that $\mathbb{F}_pG$ is semisimple and $\mathbb{F}_p$ is sufficiently large for $G$.  Recall, we have that $T: \textrm{Irr}_{2}(G) \rightarrow \textrm{Rep}_{2}(G)$ is given by $T(\theta_i) = T(\mathrm{Ind}_{\langle r \rangle}^G(\chi_i))$ = $\textrm{Ind}_{\langle r \rangle}^G({\chi}_{i}^2)$.  Recall also that for $n$ odd, $T$ is a bijection from $\textrm{Irr}_2(G)$ to $\textrm{Irr}_2(G)$ = $\Omega$.  For $n$ even, $\Omega$ = $\textrm{Irr}_2(G) \cap T(\textrm{Irr}_2(G))$ and $T: \rightarrow \Omega$ is a two to one set map.

\begin{proposition}
\label{pr:no2}
Let $G$ = $D_{2n}$.  Let $\Omega$ and $T$ be as above.  
\begin{enumerate}
\item[a.]
Let $n$ be odd, and let $\phi$ be an element of ${\rm Irr}_2(G)$ = $\Omega$.  Then, there exists a unique $\psi$ = $T^{-1}(\phi) $ in ${\rm Irr}_2(G)$ with $d^2_{V_\psi}$ = 2.  For all other $V$, $d^2_V$ = 1.  So $V_\psi$ is cohomologically maximal for $\phi$.
\item[b.]
Let $n$ be even, and let $\phi$ be an element of $\Omega$.  Then, there exist exactly two $\psi$ in ${\rm Irr}_2(G)$ with $d^2_{V_\psi}$ = 2.  For all other $V, d^2_V$ = 1.  Thus, there are precisely two $\psi$ that are cohomologically maximal for $\phi$.  These representations are exactly the elements of $T^{-1}(\{\phi\})$.
\end{enumerate}
\end{proposition}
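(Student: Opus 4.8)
The plan is to evaluate $d^2_{V_\psi}$ directly from Theorem \ref{th:no1}. Write $\phi=\theta_a$ and $\psi=\theta_j$ with $1\le a,j<n/2$. Every irreducible $\mathbb{F}_p D_{2n}$-module is self-dual, so the contragredient $\tilde\phi$ is isomorphic to $\phi=\theta_a$; and computing determinants from the displayed matrices shows that the one-dimensional representation $\tilde\phi\wedge\tilde\phi=\det\circ\tilde\phi$ is the sign character $\epsilon$ of $D_{2n}$ (with $r\mapsto 1$, $s\mapsto -1$), \emph{independently of} $a$. Since $p\equiv 1\ (\mathrm{mod}\ n)$ forces $p\nmid 2n=|G|$, the algebra $\mathbb{F}_pG$ is split semisimple, so Theorem \ref{th:no1} specializes (each module in it being inflated from $G$) to
$$\charg{2}{\Gamma}{\mathrm{Hom}_{\mathbb{F}_p}(V_\psi,V_\psi)}\;\cong\;\bigl[(V_{\theta_a}\oplus V_\epsilon)\otimes V_\psi^*\otimes V_\psi\bigr]^{G},$$
and by semisimplicity $d^2_{V_\psi}$ is the multiplicity of the trivial module $\mathbf 1$ as a summand of the $G$-module $(V_{\theta_a}\oplus V_\epsilon)\otimes V_\psi^*\otimes V_\psi$.

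Next I would determine the $G$-module structure of $V_\psi^*\otimes V_\psi$. From $\theta_j=\mathrm{Ind}_{\langle r\rangle}^G(\chi_j)$ and $\mathrm{Res}^G_{\langle r\rangle}\theta_j=\chi_j\oplus\chi_j^{-1}$, the projection formula gives $\theta_j\otimes\theta_j\cong\mathrm{Ind}_{\langle r\rangle}^G(\chi_j^2)\oplus\mathrm{Ind}_{\langle r\rangle}^G(\chi_0)$; equivalently one checks the character identity $\chi_{\theta_j}(g)^2=\chi_{T(\psi)}(g)+1+\chi_\epsilon(g)$ on the classes $r^k$ and $sr^k$. Using self-duality of $\theta_j$, the decomposition $\mathrm{Ind}_{\langle r\rangle}^G(\chi_0)\cong\mathbf 1\oplus\epsilon$, and the definition of $T$ (Definition \ref{df:no2}), this yields
$$V_\psi^*\otimes V_\psi\;\cong\;V_{T(\psi)}\oplus\mathbf 1\oplus V_\epsilon$$
as $\mathbb{F}_pG$-modules, which is valid whether or not $T(\psi)=\mathrm{Ind}_{\langle r\rangle}^G(\chi_j^2)$ is irreducible.

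Finally I would compute the six multiplicities of $\mathbf 1$ inside the tensor products of $\{\theta_a,\epsilon\}$ with $\{T(\psi),\mathbf 1,\epsilon\}$, using $\langle X\otimes Y,\mathbf 1\rangle=\langle X,Y^*\rangle$ and Frobenius reciprocity. Because $\mathrm{Res}^G_{\langle r\rangle}\epsilon$ is trivial we get $\theta_a\otimes\epsilon\cong\theta_a$ and $\epsilon\otimes\epsilon\cong\mathbf 1$; the two-dimensional irreducible $\theta_a$ has no constituent in common with $\mathbf 1$ or $\epsilon$; $\langle\epsilon,T(\psi)\rangle=\langle\mathbf 1_{\langle r\rangle},\chi_j^2\rangle=0$ since $0<2j<n$; and $\langle\theta_a,T(\psi)\rangle=1$ when $T(\psi)\cong\theta_a=\phi$ and $0$ otherwise (if $T(\psi)$ is reducible it is a sum of linear characters and so cannot contain $\theta_a$, and in that case $T(\psi)\ne\phi$ anyway). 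Summing, $d^2_{V_\psi}=1$ if $T(\psi)\ne\phi$ and $d^2_{V_\psi}=2$ if $T(\psi)=\phi$; one also checks that one-dimensional irreducibles $V$ give $d^2_V=0$, so the maximum $2$ is attained only in dimension $2$. The proposition then follows from the properties of $T$ recorded in Definition \ref{df:no2}: for $n$ odd, $T$ is a bijection of $\mathrm{Irr}_2(G)$, so $T(\psi)=\phi$ has the unique solution $\psi=T^{-1}(\phi)$; for $n$ even, $\phi\in\Omega$ forces $|T^{-1}(\phi)|=2$, so there are exactly two such $\psi$. I expect the only delicate point to be the bookkeeping in the even case when $\chi_j^2$ is fixed by the outer involution and $T(\psi)$ degenerates into a sum of two linear characters, together with keeping the index conventions $1\le i<n/2$ straight under the identification $\theta_{-i}\cong\theta_i$; this is the main, and essentially routine, obstacle.
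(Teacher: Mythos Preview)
Your proposal is correct and follows essentially the same route as the paper. The paper derives the proposition from two lemmas: Lemma~\ref{le:no1} establishes the decomposition $V_\psi^*\otimes V_\psi\cong\mathbb{F}_p\oplus V_{\chi_1}\oplus V_{T(\psi)}$ (your $\mathbf{1}\oplus V_\epsilon\oplus V_{T(\psi)}$) by an explicit matrix computation in $M_2(\mathbb{F}_p)$, and Lemma~\ref{le:no2} then plugs this and $\det\circ\tilde\phi=\chi_1$ into Theorem~\ref{th:no1} to count trivial summands, exactly as you do. The only stylistic difference is that you obtain the decomposition of $V_\psi^*\otimes V_\psi$ via the projection formula and Frobenius reciprocity rather than by writing down invariant lines in $M_2(\mathbb{F}_p)$; the subsequent multiplicity count is identical to the paper's.
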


\noindent
The proposition follows from the following two lemmas.
\begin{lemma}
\label{le:no1}
Let $G = D_{2n}$, let $1 \leq i < \frac{n}{2}$, let $V = V_{\theta_i}$, and let $\phi = T(\theta_i)$. Then, $$V^{*} \otimes V \cong \mathbb{F}_p \oplus V_{\chi_1} \oplus V_{\phi},$$ as $\mathbb{F}_pG$-modules, where $\mathbb{F}_p$ is the trivial simple $\mathbb{F}_pG$-module and $\chi_1$ is the sign representation.

More precisely, identifying $V^*\otimes V = {\textrm{Hom}}_{\mathbb{F}_p}(V,V) = M_2(\mathbb{F}_p)$ with the adjoint action of $\theta_i$ we obtain:
\begin{enumerate}
\item[a.]
The $\mathbb{F}_p$-span of $\begin{pmatrix} \ 1&0\\ 0&1 \end{pmatrix}$ is isomorphic to the trivial simple $\mathbb{F}_p G$-module  $\mathbb{F}_p$.
\item[b.]
The ${\mathbb{F}}_p$-span of $\begin{pmatrix} \ 1&0\\ 0&-1 \end{pmatrix}$ is isomorphic to $V_{\chi_1}$.
\item[c.]
The ${\mathbb{F}}_p$-span of $f = \begin{pmatrix} \ 0&1\\ 0&0 \end{pmatrix}$ and $g = \begin{pmatrix} \ 0&0\\ 1&0 \end{pmatrix}$ is isomorphic to $V_{\tilde{\phi}}$ which is isomorphic to $V_{\phi}$.
\end{enumerate}
\end{lemma}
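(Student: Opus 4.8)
The plan is to compute everything explicitly in the fixed basis. Write $\theta_i(r)=\left(\begin{smallmatrix}\omega^i&0\\0&\omega^{-i}\end{smallmatrix}\right)$ and $\theta_i(s)=\left(\begin{smallmatrix}0&1\\1&0\end{smallmatrix}\right)$ as in the displayed presentation of the $\theta_i$, and recall that the $\mathbb{F}_pG$-module structure on $V^*\otimes V=\mathrm{Hom}_{\mathbb{F}_p}(V,V)=M_2(\mathbb{F}_p)$ is the adjoint action $g\cdot A=\theta_i(g)\,A\,\theta_i(g)^{-1}$. Since $r$ and $s$ generate $G$, it suffices to describe the action of these two elements, and since $\theta_i(r)$ is diagonal and $\theta_i(s)$ is a permutation matrix both computations are immediate: conjugation by $\theta_i(r)$ fixes every diagonal matrix and scales $f$ by $\omega^{2i}$ and $g$ by $\omega^{-2i}$, while conjugation by $\theta_i(s)$ interchanges the two diagonal entries and interchanges $f$ and $g$.

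From this I would first read off the internal direct sum decomposition $M_2(\mathbb{F}_p)=\langle I\rangle\oplus\langle H\rangle\oplus\langle f,g\rangle$ into $G$-submodules, where $I$ is the identity matrix and $H=\left(\begin{smallmatrix}1&0\\0&-1\end{smallmatrix}\right)$. The line $\langle I\rangle$ is visibly fixed by $r$ and $s$, so it is the trivial simple module $\mathbb{F}_p$, which is part (a). On $\langle H\rangle$ the element $r$ acts trivially and $s$ acts by $-1$, so $\langle H\rangle\cong V_{\chi_1}$ with $\chi_1$ the sign representation, which is part (b).

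For part (c), the computation above shows that in the ordered basis $(f,g)$ the submodule $\langle f,g\rangle$ affords the representation $r\mapsto\left(\begin{smallmatrix}\omega^{2i}&0\\0&\omega^{-2i}\end{smallmatrix}\right)$, $s\mapsto\left(\begin{smallmatrix}0&1\\1&0\end{smallmatrix}\right)$, which is by definition $\mathrm{Ind}_{\langle r\rangle}^{G}(\chi_i^2)=T(\theta_i)=\phi$; hence $\langle f,g\rangle\cong V_\phi$. To match the phrasing of the statement (and the occurrence of $\tilde\phi$ in Theorem \ref{th:no1}), I would also record that $V_{\tilde\phi}\cong V_\phi$: for any character $\chi$ of $\langle r\rangle$ one has $\mathrm{Ind}_{\langle r\rangle}^G(\chi)^*\cong\mathrm{Ind}_{\langle r\rangle}^G(\chi^{-1})$, and since $srs^{-1}=r^{-1}$ the character $\chi^{-1}$ is $G$-conjugate to $\chi$, so $\mathrm{Ind}_{\langle r\rangle}^G(\chi^{-1})\cong\mathrm{Ind}_{\langle r\rangle}^G(\chi)$ (the isomorphism is just the swap of the two standard basis vectors); taking $\chi=\chi_i^2$ gives $V_{\tilde\phi}\cong V_\phi$. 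Combining the three pieces yields $V^*\otimes V=\langle I\rangle\oplus\langle H\rangle\oplus\langle f,g\rangle\cong\mathbb{F}_p\oplus V_{\chi_1}\oplus V_\phi$.

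I expect no genuine difficulty here; the lemma is essentially a direct matrix computation. The only points that need care are bookkeeping ones: confirming that the module afforded by $\langle f,g\rangle$ is indeed the representative $T(\theta_i)$ in $\mathrm{Rep}_2(G)$ after reducing the index $2i$ modulo $n$ (and, in the degenerate cases where $\phi$ fails to be irreducible, recognizing that $V_\phi$ then decomposes accordingly), and getting the contragredient identification $V_{\tilde\phi}\cong V_\phi$ correct. Note also that semisimplicity of $\mathbb{F}_pG$ is not actually used in this lemma, since the decomposition of $M_2(\mathbb{F}_p)$ is exhibited directly as an internal direct sum of submodules; it is only the ambient running hypothesis of the subsection.
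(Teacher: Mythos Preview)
Your proof is correct and follows essentially the same route as the paper's: both verify parts (a)--(c) by the explicit adjoint computation $\theta_i(r)f\theta_i(r)^{-1}=\omega^{2i}f$, $\theta_i(r)g\theta_i(r)^{-1}=\omega^{-2i}g$, and $\theta_i(s)$ swapping $f$ and $g$. You simply supply more detail than the paper, including a justification of $V_{\tilde\phi}\cong V_\phi$ via conjugacy of $\chi$ and $\chi^{-1}$, which the paper asserts without argument.
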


\begin{proof}
The first two statements a. and b. are clear.  Statement c. follows since $\theta_i(r) f \theta_i(r)^{-1} = \omega^{2i} f \textrm {, } \theta_i(r) g  \theta_i(r)^{-1} = \omega^{-2i} g, \textrm{ and } \theta_i(s) u \theta_i(s)^{-1} = v, \textrm{ for } \{u, v \} = \{f, g \}$.  
\end{proof}

\begin{lemma}
\label{le:no2}
Let $G = D_{2n}, 1 \leq i,j < \frac{n}{2}$, let $V = V_{{\theta}_i}$, and $\phi = \theta_{j}$.  Then, $d_V^2 = d_V^1 + 1$ and 

$d_V^1$ = $\begin{cases} 0, & \textrm{if } \theta_{j} \neq \textrm{T}(\theta_{i})\\ 1, & \text{if } \theta_{j} = \textrm{T}(\theta_{i}). \end{cases}$
\end{lemma}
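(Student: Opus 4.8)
The plan is to compute $d_V^1$ and $d_V^2$ directly from Corollary 3.2 and Theorem 3.1, using the decomposition of $V^*\otimes V$ given in Lemma 4.5. Since $\mathbb{F}_pG$ is semisimple, for any $\mathbb{F}_pG$-module $W$ the dimension $\dim_{\mathbb{F}_p} W^G$ equals the multiplicity of the trivial simple module in $W$; I will use this repeatedly. By Lemma 4.5, with $V = V_{\theta_i}$ and writing $\phi_0 = T(\theta_i)$ for the action associated to $\theta_i$ under $T$, we have $V^*\otimes V \cong \mathbb{F}_p \oplus V_{\chi_1} \oplus V_{\phi_0}$ as $\mathbb{F}_pG$-modules (here the adjoint action is the one relevant to the cohomology computation). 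Note that the $\phi$ in the statement of the lemma is $\theta_j$, which is the $G$-action on $N$ defining $\Gamma$; this is independent of the choice of $V$, and the contragredient $\tilde\phi = \widetilde{\theta_j}$ is isomorphic to $\theta_j$ since each $\theta_j$ is self-dual (the given matrices are conjugate to their inverse-transposes via $\theta_j(s)$).

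First I would compute $d_V^1$. By Corollary 3.2(a), $d_V^1 = \dim_{\mathbb{F}_p}(V_{\tilde\phi}\otimes V^*\otimes V)^{G}$, which is the multiplicity of the trivial module in $V_{\tilde\phi}\otimes V^*\otimes V$, equivalently (by self-duality of $\theta_j$) the multiplicity of $V_{\theta_j}$ in $V^*\otimes V$. From Lemma 4.5 this multiplicity is the number of summands among $\{\mathbb{F}_p, V_{\chi_1}, V_{\phi_0}\}$ isomorphic to $V_{\theta_j}$. Since $\theta_j$ is two-dimensional irreducible and $\mathbb{F}_p, V_{\chi_1}$ are one-dimensional, the only possible contribution is from $V_{\phi_0}$, and $V_{\phi_0}\cong V_{\theta_j}$ exactly when $\theta_j = T(\theta_i)$. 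This yields $d_V^1 = 1$ if $\theta_j = T(\theta_i)$ and $d_V^1 = 0$ otherwise, which is the claimed formula.

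Next I would show $d_V^2 = d_V^1 + 1$. By Theorem 3.1, $\mathrm{H}^2(\Gamma,\mathrm{Hom}_{\mathbb{F}_p}(V,V)) \cong [(V_{\tilde\phi}\otimes V^*\otimes V)\oplus(V_{\tilde\phi\wedge\tilde\phi}\otimes V^*\otimes V)]^{G}$, so $d_V^2 = d_V^1 + \dim_{\mathbb{F}_p}(V_{\tilde\phi\wedge\tilde\phi}\otimes V^*\otimes V)^{G}$, using Corollary 3.2(a) for the first summand. Since $N$ has rank two, Theorem 3.1 tells us $\tilde\phi\wedge\tilde\phi = \det\circ\tilde\phi$ is one-dimensional; for $\phi = \theta_j = \mathrm{Ind}_{\langle r\rangle}^G(\chi_j)$ one has $\det\theta_j(r) = \omega^j\cdot\omega^{-j} = 1$ and $\det\theta_j(s) = -1$, so $\det\circ\tilde\phi = \det\circ\theta_j$ is the sign representation $\chi_1$. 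Therefore the second term is the multiplicity of the trivial module in $V_{\chi_1}\otimes V^*\otimes V$, i.e. the multiplicity of $V_{\chi_1}$ in $V^*\otimes V$, which by Lemma 4.5 is exactly $1$ (the $\mathbb{F}_p$-span of $\mathrm{diag}(1,-1)$). Hence $d_V^2 = d_V^1 + 1$, completing the proof.

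The routine part is the character/multiplicity bookkeeping; the only point requiring a little care is identifying $\det\circ\theta_j$ with the sign representation $\chi_1$ and confirming $\chi_1 \not\cong$ the trivial module (which holds since $n\geq 3$, so $s$ acts nontrivially), so that the $V_{\chi_1}$ summand of $V^*\otimes V$ contributes to $d_V^2$ but never to $d_V^1$. I do not anticipate a genuine obstacle here, since all the structural input is already packaged in Lemma 4.5, Theorem 3.1, and Corollary 3.2.
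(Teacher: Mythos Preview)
Your proof is correct and follows essentially the same approach as the paper's own proof: both apply Theorem~3.1 and Corollary~3.3 (which you label Corollary~3.2) to the decomposition $V^*\otimes V \cong \mathbb{F}_p \oplus V_{\chi_1} \oplus V_{T(\theta_i)}$ from Lemma~4.5, identify $\det\circ\tilde\phi$ with the sign character $\chi_1$, and then count multiplicities of the trivial module in each summand. The only cosmetic difference is that you invoke self-duality of $\theta_j$ explicitly to reduce $V_{\tilde\phi}$ to $V_{\theta_j}$ before counting, whereas the paper expands the tensor products directly and uses $V_{\tilde\phi}\otimes V_{\chi_1}\cong V_{\tilde\phi}$.
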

\begin{proof}
Define $T(V) = V_{T(\theta_i)}$.  By Lemma \ref{le:no1}, we have $V^{*} \otimes V \cong \mathbb{F}_p \oplus V_{\chi_1} \oplus T(V)$ as $\mathbb{F}_pG$-modules.  Note for any $\phi$ in $\textrm{Irr}_2(G)$, we have $\textrm{det}\circ (\tilde{\phi}) = \chi_{1}$.  Since we assume $\mathbb{F}_pG$ is semisimple, the $\mathbb{F}_p$-dimension of the $G$-fixed points of any $\mathbb{F}_pG$-module is the multiplicity of the trivial simple $\mathbb{F}_pG$-module as a summand.  Recall that we identify $G = \Gamma/N$.  By Theorem \ref{th:no1} and Corollary \ref{co:no1}, we have that $\charg{2}{\Gamma}{\textrm{Hom}_{\mathbb{F}_{p}}(V,V)}\cong[(V_{\tilde{\phi}}\otimes V^{*}\otimes V)\oplus (V_{\rm{det}\circ (\tilde{\phi})}\otimes V^{*}\otimes V)]^{G}$, and $\charg{1}{\Gamma}{\textrm{Hom}_{\mathbb{F}_p}(V,V)}$ = $(V_{\tilde{\phi}}\otimes V^{*}\otimes V)^{G} $.  Hence, $\charg{2}{\Gamma}{\textrm{Hom}_{\mathbb{F}_{p}}(V,V)} \cong [(V_{\tilde{\phi}}\otimes (\mathbb{F}_p \oplus V_{\chi_1} \oplus T(V))]^G \oplus [(V_{\chi_{1}}\otimes (\mathbb{F}_p \oplus V_{\chi_1} \oplus T(V))]^G \cong [(V_{\tilde{\phi}}\otimes (\mathbb{F}_p \oplus V_{\chi_1} \oplus T(V))]^G \oplus [V_{\chi_1} \oplus \mathbb{F}_p \oplus T(V)]^G  \cong [V_{\tilde{\phi}} \oplus V_{\tilde{\phi}} \oplus (V_{\tilde{\phi}} \otimes T(V))]^G \oplus [V_{\chi_1} \oplus \mathbb{F}_p \oplus T(V)]^G$.  It is clear that the trivial simple $\mathbb{F}_pG$-module appears as a summand of the second term with multiplicity 1.  Additionally, the trivial simple $\mathbb{F}_pG$-module is a summand of the first term if and only if $ V_{\phi} \cong V_{\tilde{\phi}} \cong T(V)$, i.e. $\phi = \theta_j = T(\theta_i)$.
\end{proof} 

Observe that we have shown that for all $\phi$ not in $\Omega$, $\charg{2}{\Gamma}{\textrm{Hom}_{\mathbb{F}_{p}}(V,V)}$ is one-dimensional for every two-dimensional irreducible $\mathbb{F}_pG$-module $V$.  Hence, in this case, every $V$ in $\textrm{Irr}_2(G)$ is cohomologically maximal.  By the argument in Corollary \ref{co:center}, we moreover have that for all such $V$, $R(\Gamma,V) \cong {\mathbb{Z}}_p$.  In the following sections, we will show that when this happens, the fusion of $N$ in $\Gamma$ cannot typically be detected by the knowledge of $R(\Gamma,V)$.  For certain choices of $n$, however, the situation is actually better.  More precisely, if $n$ is either a power of $2$, or $n = 2 q$ for some odd prime $q$, then the fusion of $N$ in $\Gamma$ can always be determined by the knowledge of all $R(\Gamma,V)$.  

\subsection{Universal Deformation Rings}
\label{ss:udr}
\noindent
In this subsection we determine the universal deformation ring $R(\Gamma,V)$ for every 2-dimensional irreducible $\mathbb{F}_pG$-module $V$, which we view as an $\mathbb{F}_p\Gamma$-module by inflation.  We continue to assume that $\mathbb{F}_pG$ is semisimple and $\mathbb{F}_p$ is sufficiently large for $G$.  We use a result from \cite[Thm. 3.1]{bleher-chinburg-desmit} to show that if $\charg{2}{\Gamma}{\textrm{Hom}_{\mathbb{F}_{p}}(V,V)}$ is two-dimensional, then $R(\Gamma,V) \cong {\mathbb{Z}}_{p}[[t]]/(t^2,pt)$.  Recall, we have shown that for $G$ = $D_{2n}$, $d^2_V = \textrm{dim}_{{\mathbb{F}}_p}(\charg{2}{\Gamma}{\mathrm{Hom}_{\mathbb{F}_{p}}(V,V))}$ is two-dimensional if and only if $d^1_V$ = 1.  Otherwise $d^2_V$ = 1 and $d^1_V$ = 0.  In the latter case, $R(\Gamma,V)$ is a quotient of ${\mathbb{Z}}_{p}$.  Since any $V$ has a lift to $\mathbb{Z}_p$, it follows that in this case the universal deformation ring is ${\mathbb{Z}}_{p}$.  
\begin{proposition}
\label{pr:no3}
Let $G$ = $D_{2n}$, let $\phi$ be in $\Omega$, and let $V$ be a 2-dimensional irreducible $\mathbb{F}_pG$-module.  Then,
\vspace*{.1 in}

$R(\Gamma,V) = \begin{cases} {\mathbb{Z}}_{p} & \textrm{if V is not cohomologically maximal for } \phi,\\  {\mathbb{Z}}_{p}[[t]]/(t^2,pt) & \textrm{if V is cohomologically maximal for } \phi. \end{cases}$  
\vspace*{.1 in}

\noindent
Additionally, for any $\phi$ in ${\rm Irr}_2(G)$, $R(\Gamma,V) \cong {\mathbb{Z}}_{p}[[t]]/(t^2,pt)$ if and only if $d^2_V$ is equal to two.  Thus, for $\phi$ not in $\Omega$, $R(\Gamma,V) \cong {\mathbb{Z}_p}$.

\end{proposition}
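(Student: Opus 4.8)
The plan is to feed the cohomological computations of Lemma~\ref{le:no2} and Proposition~\ref{pr:no2} into Theorem~\ref{thm:udr}, and to combine the resulting constraints on $R(\Gamma,V)$ with the fact that every $V$ in question lifts over $\mathbb{Z}_p$. So the first step is to record that, by Lemma~\ref{le:no2}, for any $2$-dimensional irreducible $\mathbb{F}_pG$-module $V$ the pair $(d^1_V,d^2_V)$ equals either $(0,1)$ or $(1,2)$, the value $(1,2)$ occurring precisely when $V$ is cohomologically maximal for $\phi$ (equivalently, when $\phi\in\Omega$ and $\mathrm{T}(V)=\phi$), and that when $\phi\notin\Omega$ only the value $(0,1)$ occurs. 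The second preliminary remark is that $V$, being a projective $\mathbb{F}_pG$-module inflated to $\Gamma$, has a lift over $\mathbb{Z}_p$, as used in the proof of Corollary~\ref{co:center}; the associated classifying morphism is then a surjection $R(\Gamma,V)\twoheadrightarrow\mathbb{Z}_p$ in $\hat{\mathcal{C}}$, since any morphism in $\hat{\mathcal{C}}$ with target $\mathbb{Z}_p$ has image containing (hence equal to) $\mathbb{Z}_p$.

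Next I would dispose of the case $(d^1_V,d^2_V)=(0,1)$. Here Theorem~\ref{thm:udr} gives $R(\Gamma,V)\cong \mathbb{Z}_p[[t_1,\dots,t_r]]/J$ with $r=d^1_V=0$, so $R(\Gamma,V)$ is a quotient of $\mathbb{Z}_p$; a quotient of $\mathbb{Z}_p$ admitting a surjection onto $\mathbb{Z}_p$ must be $\mathbb{Z}_p$ itself, whence $R(\Gamma,V)\cong\mathbb{Z}_p$. This already settles the non-cohomologically-maximal $V$ for $\phi\in\Omega$ and all $V$ for $\phi\notin\Omega$.

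For the case $(d^1_V,d^2_V)=(1,2)$, Theorem~\ref{thm:udr} (together with the fact that in the construction the relations may be taken inside $\mathfrak{m}^2$, where $\mathfrak{m}=(p,t)$) gives $R(\Gamma,V)\cong\mathbb{Z}_p[[t]]/J$ with $J$ generated by at most two elements and $J\subseteq\mathfrak{m}^2$. Composing the surjection $R(\Gamma,V)\twoheadrightarrow\mathbb{Z}_p$ with the continuous automorphism of $\mathbb{Z}_p[[t]]$ sending $t\mapsto t-t_0$, where $t_0\in p\mathbb{Z}_p$ is the image of $t$, I may assume the surjection kills $t$, so that $J\subseteq (t)\cap\mathfrak{m}^2=(pt,t^2)$. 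To upgrade this inclusion to the equality $J=(pt,t^2)$ I would invoke \cite[Thm.~3.1]{bleher-chinburg-desmit}, checking that its hypotheses hold for the pair $(\Gamma,V)$ — the relevant input being $d^1_V=1$, the existence of a $\mathbb{Z}_p$-lift, and the explicit description of $\charg{1}{\Gamma}{\mathrm{Hom}_{\mathbb{F}_p}(V,V)}$ and $\charg{2}{\Gamma}{\mathrm{Hom}_{\mathbb{F}_p}(V,V)}$ from subsection~\ref{ss:cohfordih}. This yields $R(\Gamma,V)\cong\mathbb{Z}_p[[t]]/(t^2,pt)$.

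Finally I would assemble the statement. For $\phi\in\Omega$: $V$ cohomologically maximal gives $(d^1_V,d^2_V)=(1,2)$ and $R(\Gamma,V)\cong\mathbb{Z}_p[[t]]/(t^2,pt)$, while otherwise $(d^1_V,d^2_V)=(0,1)$ and $R(\Gamma,V)\cong\mathbb{Z}_p$. For arbitrary $\phi\in\mathrm{Irr}_2(G)$ the two computations show $R(\Gamma,V)\cong\mathbb{Z}_p[[t]]/(t^2,pt)$ exactly when $d^2_V=2$, the reverse implication holding because $\mathbb{Z}_p$ has no $p$-torsion whereas $\mathbb{Z}_p[[t]]/(t^2,pt)$ does. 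Since $\phi\notin\Omega$ forces $d^2_V=1$ for every $V$, in that case $R(\Gamma,V)\cong\mathbb{Z}_p$, completing the proof. The main obstacle is the step $J=(pt,t^2)$: the information $J\subseteq(pt,t^2)$ together with $d^2_V=2$ does not by itself pin down $J$ (ideals such as $(pt)$ or $(t^2)$, or flat-over-$\mathbb{Z}_p$ ideals such as $(t^2-pt)$, are a priori compatible with those constraints), so one genuinely needs the extra leverage of \cite[Thm.~3.1]{bleher-chinburg-desmit}, which rules out any characteristic-zero or smaller non-flat deformation ring for these groups.
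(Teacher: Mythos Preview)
Your approach is essentially the paper's: reduce the $(d^1_V,d^2_V)=(0,1)$ case to $R(\Gamma,V)\cong\mathbb{Z}_p$ via Theorem~\ref{thm:udr} and the $\mathbb{Z}_p$-lift, and appeal to \cite[Thm.~3.1]{bleher-chinburg-desmit} for the $(1,2)$ case. The one place where you are vaguer than the paper is precisely the heart of the argument. You list the ``relevant input'' for \cite[Thm.~3.1]{bleher-chinburg-desmit} as $d^1_V=1$, a $\mathbb{Z}_p$-lift, and the cohomology descriptions, but the hypothesis one actually has to verify is different and more concrete: one needs an injective $G$-equivariant group homomorphism $\iota\colon N\hookrightarrow M_2(\mathbb{F}_p)=\mathrm{Hom}_{\mathbb{F}_p}(V,V)$ fitting into a commutative diagram of extensions over the row $0\to M_2(W/pW)\to GL_2(R)\to GL_2(W)\to 1$ (with $W=\mathbb{Z}_p$, $R=W[[t]]/(pt,t^2)$), together with the condition that $\iota(N)$ contains two elements that do \emph{not} commute under matrix multiplication in $M_2(\mathbb{F}_p)$. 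The paper builds $\iota$ from the explicit summand $V_\phi\subseteq V^*\otimes V$ of Lemma~\ref{le:no1}(c) (available exactly when $d^2_V=2$), sending $(n_1,n_2)\mapsto n_1 f+n_2 g=\begin{pmatrix}0&n_1\\ n_2&0\end{pmatrix}$, and the non-commutativity check is then simply $fg\neq gf$. Once that is in hand, the argument of the cited theorem yields $R(\Gamma,V)\cong\mathbb{Z}_p[[t]]/(t^2,pt)$ directly; your preparatory reduction $J\subseteq(t)\cap\mathfrak{m}^2=(pt,t^2)$ is correct but unnecessary.
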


\noindent
\begin{proof}
By our comments before the statement of the proposition, we only need to consider the case when $d_V^2 = 2$.  Following the proof of \cite[Thm. 3.1]{bleher-chinburg-desmit}, let $W$ = ${\mathbb{Z}}_p$ and $R = W[[t]]/(pt, t^2)$.  Since $d^2_V = 2$, it follows from Lemmas \ref{le:no1} and \ref{le:no2} that $V_{\phi}$ is a summand of $V^*\otimes V$.  Identifying $N = \mathbb{F}_p \times \mathbb{F}_p$ and using Lemma \ref{le:no1}, we obtain an injective group homomorphism $\iota: N \rightarrow M_2(\mathbb{F}_p) \cong M_2(W/pW)$ given by $\iota((n_1,n_2)) = n_1f + n_2g = \begin{pmatrix} 0 & n_1\\ n_2 & 0\end{pmatrix}$.  Hence, we have a commutative diagram

$$\begin{tikzpicture}[description/.style={fill=white,inner sep=2pt}]
\matrix (m) [matrix of math nodes, row sep=3em,
column sep=2.5em, text height=1.5ex, text depth=0.25ex]
{ 0 & N & \Gamma & G & 1\\ 
0 & M_{2}(W/pW) & GL_{2}(R) & GL_{2}(W) & 1\\ };
\path[->,font=\scriptsize]
(m-1-1) edge node[auto] {} (m-1-2)
(m-1-2) edge node[auto] {} (m-1-3)
edge node[auto] {$\iota$} (m-2-2)
(m-1-3) edge node[auto] {} (m-1-4)
edge node[auto] {$\rho_{R}$} (m-2-3)
(m-1-4) edge node[auto] {} (m-1-5)
edge node[auto] {$\rho_{W}$} (m-2-4)
(m-2-1) edge node[auto] {} (m-2-2)
(m-2-2) edge node[auto] {$d$} (m-2-3)
(m-2-3) edge node[auto] {} (m-2-4)
(m-2-4) edge node[auto] {} (m-2-5);
\end{tikzpicture}$$
\vspace*{.1 in}

\noindent
where $d$($X$) = $1 + tX$ as in \cite[Thm. 3.1]{bleher-chinburg-desmit}.  We notice that all the arguments in the proof of \cite[Thm. 3.1]{bleher-chinburg-desmit} go through once we have proved that the image under $\iota$ of the group $N$ contains two elements which do not commute with each other under multiplication in $M_{2}(W/pW)$.  Using the notation in Lemma \ref{le:no1}, we see that $f \cdot g \neq g \cdot f$.  Thus, $R(\Gamma,V) \cong {\mathbb{Z}}_{p}[t]/(t^2, pt)$.
\end{proof}



\subsection{Fusion for Dihedral Groups}
\label{ss:fus}

\noindent
In this subsection, we determine the fusion of $\phi \in \textrm{Irr}_2(G)$, which uniquely determines the fusion of $N$ in $\Gamma$ when the action of $G = \Gamma/N$ on $N$ is given by $\phi$ (see Definition 3.5).

\begin{proposition}
Let $G = D_{2n}, 1 \leq i_0 < \frac{n}{2}$ and $\phi$ = $\theta_{{i}_{0}}$.  Let $(i_0,n)$ denote the greatest common divisor of $i_0$ and $n$, and define $k = n/(i_0,n)$.  Let $\omega \in {\mathbb{F}}_p^*$ be a primitive $n$-th root of unity.  Writing each element in $N$ as $\left(\begin{array}{c}x\\y\end{array}\right)$ with respect to the fixed basis for the representation $\theta_{i_0}$, the fusion orbits are as follows:
\begin{enumerate}
\item $ Orb \left(\begin{array}{c}0\\0\end{array}\right) $ = $\left\{ \left(\begin{array}{c}0\\0\end{array}\right) \right\}_.$
\item For $\left(\begin{array}{c}x\\y\end{array}\right) \in \mathbb{F}_{p}^{*} \times \mathbb{F}_{p}^{*}$, $ y/x \in \langle \omega^{i_{0}} \rangle$, we have \\
$ Orb \left(\begin{array}{c}x\\y\end{array}\right) $ = $\left\{ \left(\begin{array}{c}x\\y\end{array}\right)_, \left(\begin{array}{c}{\omega}^{i_0}x\\{\omega}^{-i_0}y\end{array}\right)_{,...,}\left(\begin{array}{c}{\omega}^{(k - 1)i_0}x\\{\omega}^{-(k - 1)i_0}y\end{array}\right)\right\}_.$
\item For $\left(\begin{array}{c}x\\y\end{array}\right)$ not in 1. or 2., we have  \\ $ Orb \left(\begin{array}{c}x\\y\end{array}\right) $ = $\left\{ \left(\begin{array}{c}x\\y\end{array}\right)_, \left(\begin{array}{c}{\omega}^{i_0}x\\{\omega}^{-i_0}y\end{array}\right)_{,...,}\left(\begin{array}{c}{\omega}^{(k - 1)i_0}x\\{\omega}^{-(k - 1)i_0}y\end{array}\right)_,\left(\begin{array}{c}y\\x\end{array}\right)_{,...,}\left(\begin{array}{c}{\omega}^{-(k - 1)i_0}y\\{\omega}^{(k - 1)i_0}x\end{array}\right)\right\}_.$

\end{enumerate}
\end{proposition}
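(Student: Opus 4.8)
The plan is to compute the $G$-orbits on $N$ directly from the explicit matrices defining $\theta_{i_0}$. First I would record the action. Writing an element of $N = \mathbb{F}_p \times \mathbb{F}_p$ as a column vector $\binom{x}{y}$ with respect to the fixed basis, $r$ acts by $\binom{x}{y} \mapsto \binom{\omega^{i_0}x}{\omega^{-i_0}y}$ and $s$ acts by $\binom{x}{y} \mapsto \binom{y}{x}$; hence $r^m$ sends $\binom{x}{y}$ to $\binom{\omega^{mi_0}x}{\omega^{-mi_0}y}$ and $sr^m$ sends $\binom{x}{y}$ to $\binom{\omega^{-mi_0}y}{\omega^{mi_0}x} = r^{-m}\binom{y}{x}$. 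Since $G = \langle r\rangle \sqcup s\langle r\rangle$, the $G$-orbit of $v = \binom{x}{y}$ is the union of the $\langle r\rangle$-orbit of $v$ with the $\langle r\rangle$-orbit of $sv = \binom{y}{x}$.

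Next I would describe the cyclic orbits. The scalar $\omega^{i_0}$ has multiplicative order $k = n/(i_0,n)$ in $\mathbb{F}_p^*$, so for a nonzero $\binom{x}{y}$ the $k$ vectors $\binom{\omega^{mi_0}x}{\omega^{-mi_0}y}$, $0 \le m < k$, are pairwise distinct: compare first coordinates if $x \neq 0$, second coordinates if $x = 0$. Thus the $\langle r\rangle$-orbit of every nonzero vector has size exactly $k$, while $\binom{0}{0}$ is fixed; this already yields case 1. For nonzero $v$ it then remains only to decide whether the $\langle r\rangle$-orbits through $v$ and through $sv$ coincide or are disjoint. They coincide precisely when $\binom{y}{x}$ lies in the $\langle r\rangle$-orbit of $\binom{x}{y}$, i.e.\ when $\omega^{mi_0}x = y$ for some $m$ (and then $\omega^{-mi_0}y = x$ follows automatically). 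Since $v \neq 0$, this forces both $x$ and $y$ nonzero, and it is then equivalent to $y/x \in \langle\omega^{i_0}\rangle$.

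Assembling these facts gives the three cases. If $\binom{x}{y} \in \mathbb{F}_p^* \times \mathbb{F}_p^*$ and $y/x \in \langle\omega^{i_0}\rangle$, the $G$-orbit is a single $\langle r\rangle$-orbit of size $k$, which is the list in case 2. For every other nonzero vector --- in particular any vector with a zero coordinate, for which the coincidence condition fails --- the $G$-orbit is a disjoint union of two $\langle r\rangle$-orbits of size $k$; its second half is $\{sr^m v : 0 \le m < k\} = \{r^{-m}\binom{y}{x} : 0 \le m < k\}$, so the orbit matches the $2k$-element list in case 3. Finally $\binom{0}{0}$ gives case 1. This is essentially a routine orbit computation; the only points requiring care are distinctness of the displayed orbit elements and ensuring that vectors with exactly one zero coordinate land in case 3 rather than case 2, so I do not anticipate a genuine obstacle.
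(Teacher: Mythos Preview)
Your argument is correct and essentially the same as the paper's: both compute the $G$-orbits directly from the explicit matrices for $\theta_{i_0}$. The only cosmetic difference is that the paper phrases the computation via stabilizers (showing $\mathrm{Stab}(v)\cap\langle r\rangle=\langle r^k\rangle$ for $v\neq 0$, with an extra reflection $sr^{j_0}$ in the stabilizer exactly when $y/x=\omega^{i_0 j_0}$), whereas you describe the orbit directly as the union of the $\langle r\rangle$-orbits of $v$ and $sv$; these are the two sides of the orbit--stabilizer theorem and yield the same case split.
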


\begin{proof}
As before, $G = \langle r,s \mid r^n, s^2, srs^{-1}r \rangle$.  We have $r^{j} \cdot \left(\begin{array}{c}x\\y\end{array}\right)$= $\left(\begin{array}{c}x\\y\end{array}\right)$ if and only if $\omega^{i_{0}j} \cdot x = x$ and $\omega^{-i_{0}j} \cdot y = y$.  Also, $sr^{j} \cdot \left(\begin{array}{c}x\\y\end{array}\right)$= $\left(\begin{array}{c}x\\y\end{array}\right)$ if and only if $x = \omega^{-i_{0}j} \cdot y$ and $y = \omega^{i_{0}j} \cdot x$. Therefore, for all $\left(\begin{array}{c}x\\y\end{array}\right) \neq \left(\begin{array}{c}0\\0\end{array}\right)$, the intersection of the stabilizer of $\left(\begin{array}{c}x\\y\end{array}\right)$ with $\langle r \rangle$ is $\langle r^{n/(i_0,n)} \rangle = \langle r^k \rangle$.  In fact, for $\left(\begin{array}{c}x\\y\end{array}\right)$ as in $3.$, this is the full stabilizer.  If $\left(\begin{array}{c}x\\y\end{array}\right)$ is as in $2.$, say $y/x = \omega^{i_0 j_0}$, then the full stabilizer is $\langle r^k, sr^{j_0} \rangle$.  This implies that the fusion orbits are as stated in the proposition.
\end{proof}

\begin{corollary}
\noindent
With the same notation as in Proposition 4.8, the fusion of $\phi$ is uniquely determined by the greatest common divisor $(i_0,n)$.  Moreover, the fusion numbers of $\phi$ (see Definition 3.5) uniquely determine the fusion of $\phi$.
\end{corollary}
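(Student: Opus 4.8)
The plan is to read both assertions off the explicit orbit description in Proposition 4.8. Put $d=(i_0,n)$ and $k=n/d$. Since $\omega$ has order $n$ in $\mathbb{F}_p^*$, the element $\omega^{i_0}$ has order $k$, so $\langle\omega^{i_0}\rangle=\langle\omega^{d}\rangle$ and, moreover, $\{\omega^{ji_0}:0\le j<k\}=\langle\omega^{d}\rangle$. Hence every condition and every orbit appearing in Proposition 4.8 can be rewritten purely in terms of $d$: the membership condition ``$y/x\in\langle\omega^{i_0}\rangle$'' becomes ``$y/x\in\langle\omega^{d}\rangle$'', an orbit of type (2) is $\{(\zeta x,\zeta^{-1}y):\zeta\in\langle\omega^{d}\rangle\}$, and an orbit of type (3) is $\{(\zeta x,\zeta^{-1}y):\zeta\in\langle\omega^{d}\rangle\}\cup\{(\zeta y,\zeta^{-1}x):\zeta\in\langle\omega^{d}\rangle\}$. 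Therefore the partition of $N$ into fusion orbits depends only on $d=(i_0,n)$, which is the first claim. (Equivalently, one may note that the image $\theta_{i_0}(G)\subseteq GL_2(\mathbb{F}_p)$ is the subgroup generated by $\mathrm{diag}(\omega^{d},\omega^{-d})$ and the permutation matrix $\theta_{i_0}(s)$, so it depends only on $(i_0,n)$, and the fusion orbits are precisely the orbits of this subgroup acting on $N=\mathbb{F}_p^2$.)

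For the second assertion I would compute the fusion numbers explicitly. Since $1\le i_0<n/2$ we have $d=(i_0,n)\le i_0<n/2$, hence $k=n/d\ge 3$; in particular $1$, $k$, $2k$ are three distinct integers. By Proposition 4.8 every nontrivial orbit has size $k$ (type (2)) or $2k$ (type (3)), and the trivial orbit has size $1$. A short count gives the number of orbits of each kind: the elements lying in type-(2) orbits are exactly the $\binom{x}{y}$ with $x\in\mathbb{F}_p^*$ and $y/x\in\langle\omega^{d}\rangle$, of which there are $(p-1)k$, so there are $p-1$ orbits of type (2); the remaining $p^2-1-(p-1)k$ nonzero elements then split into $\bigl(p^2-1-(p-1)k\bigr)/(2k)$ orbits of type (3), a number which is $\ge 1$ since, for instance, $\binom{1}{0}$ lies in an orbit of type (3). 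Consequently $F_{\phi,1}=1$, $F_{\phi,k}=p-1$, $F_{\phi,2k}=\bigl(p^2-1-(p-1)k\bigr)/(2k)$, and $F_{\phi,m}=0$ otherwise, so the set $\{m:F_{\phi,m}\neq 0\}$ is exactly $\{1,k,2k\}$.

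It then follows that $k$ is recovered from the fusion numbers as the smallest integer $m>1$ with $F_{\phi,m}\neq 0$, hence $d=(i_0,n)=n/k$ is recovered as well, and by the first assertion this determines the fusion of $\phi$. I do not expect a genuine obstacle: the argument is essentially a reorganization of Proposition 4.8 together with an elementary orbit count. The only points needing a moment's care are the inequality $k\ge 3$ (so that $1$, $k$, $2k$ are distinct and the two nontrivial orbit sizes can be distinguished) and the observation that an orbit of type (3) always exists, both of which are immediate.
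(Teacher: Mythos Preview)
Your proposal is correct and follows essentially the same approach as the paper: both deduce the first claim from the observation that the orbit description in Proposition~4.8 depends only on $\langle\omega^{i_0}\rangle=\langle\omega^{(i_0,n)}\rangle$ (equivalently, on the stabilizer computation there), and both compute the fusion numbers $F_{\phi,1}=1$, $F_{\phi,k}=p-1$, $F_{\phi,2k}=(p-1)(p+1-k)/(2k)$ with all others zero. You are slightly more explicit than the paper in spelling out why these numbers recover $k$ (noting $k\ge 3$ so that $1,k,2k$ are distinct and $k$ is the least $m>1$ with $F_{\phi,m}\neq 0$), but this is exactly the implicit reasoning the paper relies on.
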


\begin{proof}
The first statement follows from the stabilizer calculation in the proof of Proposition 4.8.  Moreover, the fusion numbers $F_{\phi,m}$ are as follows (letting $k = n/(i_0,n)$ as before):\\
$F_{\phi,1} = 1$\\
$F_{\phi,k} = p - 1$\\
$F_{\phi,2k} = {\frac{(p - 1)(p + 1 - k)}{2k}}_,$\\
and $F_{\phi,m} = 0$ for all other $m \geq 1$.
\end{proof}
In particular, two representations $\theta_i$, $\theta_{i_0}$ in $\mathrm{Irr}_{2}(G)$ have the same fusion if and only if $(i,n) = (i_0,n)$.
\subsection{Proof of Main Results}
\label{ss:proof}
\noindent
In view of the results proved in subsections \ref{ss:cohfordih}, \ref{ss:udr} and \ref{ss:fus}, to complete the proofs of Theorems \ref{th:no2} and \ref{th:no3}, it remains only to prove the one-to-one correspondence for $\phi \in \Omega$:
\vspace*{.1 in}

\noindent
$$\textrm{Fusion of } \phi \leftrightsquigarrow \{\textrm{ker}(\psi) : \psi \in \mathrm{Irr}_{2}(G)\textrm{ is cohomologically maximal for } \phi\}.$$

\noindent
We note that for any $1 \leq i < \frac{n}{2}$, the kernel of $\theta_i$ is uniquely determined by $(i, n)$.  Moreover, we have $T(\theta_i) = \begin{cases} \theta_{2i} & \textrm{if } 2i < \frac{n}{2}\\ \theta_{n - 2i} & \textrm{otherwise} \end{cases}$  
\vspace*{.1 in}

\noindent
Therefore, for $n$ odd, the result follows since $(i,n) = (i_0,n)$ when $T(\theta_i) = \theta_{i_0}$.  In the case when $n$ is even, let $\theta_{i_0} \in \Omega$, i.e. $1 \leq i_0 \leq \frac{n}{2} - 1$ and $i_0 = 2d_0$ for some $d_0$.  Moreover, $T^{-1}(\theta_{i_0}) = \{ \theta_{d_0}, \theta_{k - {d_0} } \}$ for $k = \frac{n}{2}$.  Therefore, for $n$ even, the result follows from the following lemma.
\begin{lemma}
Let $n$ be even, $k = \frac{n}{2}$, and write $n = 2^{\lambda} \cdot m$, for some odd $m$.  Let $\theta_{i_0} \in \Omega$ and write $i_0 = 2d_0$.  Define $a_0 = (d_0, k)$.  Then $\{(d_{0},n), (k-d_{0},n)\}$  =  $\{(a_{0},n), (k-a_{0},n)\}$.  Moreover, $(i_0,n) = 2a_0, (a_0,n) = a_0$, and $(k - a_0,n) \in \{a_0, 2a_0 \}$  
\end{lemma}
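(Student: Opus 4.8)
The plan is to treat everything as an elementary arithmetic statement about greatest common divisors, since $\theta_j$ and $\ker(\theta_j)$ are controlled by $(j,n)$, and the three quantities in play, $(d_0,n)$, $(k-d_0,n)$, and $a_0=(d_0,k)$, all live over $n=2^\lambda m$ with $k=2^{\lambda-1}m$. First I would split $n$ into its $2$-part and its odd part and analyze each prime separately via the Chinese Remainder Theorem, reducing the whole lemma to: for each prime power $\ell^e \parallel n$, compare $v_\ell(d_0)$, $v_\ell(k-d_0)$, $v_\ell(a_0)$ after capping at $e$ (for odd $\ell$) or at $\lambda$ (for $\ell=2$). For odd primes $\ell$ this is immediate because $v_\ell(k)=v_\ell(n)=e$, so $v_\ell(a_0)=\min(v_\ell(d_0),e)$ agrees with the $\ell$-part of $(d_0,n)$ exactly, and likewise $v_\ell(k-d_0)$ contributes the same way to $(k-d_0,n)$ as it does to the pair built from $a_0$; hence the odd parts of all four gcds $(d_0,n),(k-d_0,n),(a_0,n),(k-a_0,n)$ match up in the obvious way.

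The only genuine content is at the prime $2$, where $v_2(k)=\lambda-1$ while $v_2(n)=\lambda$, so the capping thresholds differ. Here I would do a short case analysis on $t:=v_2(d_0)$ relative to $\lambda-1$. If $t<\lambda-1$ then $v_2(k-d_0)=t$ as well, so both $d_0$ and $k-d_0$ have $2$-adic valuation $t$, and $a_0=(d_0,k)$ has $2$-part $2^{\min(t,\lambda-1)}=2^t$; one then checks that $(a_0,n)$ and $(k-a_0,n)$ also both pick up exactly $2^t$ from the prime $2$, giving $\{(d_0,n),(k-d_0,n)\}=\{(a_0,n),(k-a_0,n)\}$ term by term. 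If $t\ge\lambda-1$, then $v_2(k)=\lambda-1\le t$ forces $v_2(k-d_0)=\lambda-1$, so one of $d_0,k-d_0$ contributes the full $2^\lambda$ (or $2^{\min(t,\lambda)}$) to its gcd with $n$ and the other contributes $2^{\lambda-1}$; meanwhile $a_0$ has $2$-part $2^{\lambda-1}$, so $(a_0,n)$ picks up $2^{\lambda-1}$ and $(k-a_0,n)$ picks up the complementary $2^\lambda$. Matching the unordered pairs in each subcase is a finite, mechanical check.

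Once the set equality $\{(d_0,n),(k-d_0,n)\}=\{(a_0,n),(k-a_0,n)\}$ is established, the three trailing assertions are corollaries of the same valuation bookkeeping: $(i_0,n)=2a_0$ because $i_0=2d_0$ and $2a_0\mid n$ (using $2a_0=(2d_0,2k)$ and that $2k\mid n$), $(a_0,n)=a_0$ because $a_0=(d_0,k)\mid k\mid n$, and $(k-a_0,n)\in\{a_0,2a_0\}$ because $a_0\mid k-a_0$ while $v_2(k-a_0)\in\{\lambda-1,\lambda\}$ pins the $2$-part to within a single factor of $2$ and all odd primes behave identically for $a_0$ and $k-a_0$ (as $a_0\mid k$). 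I expect the main obstacle to be purely organizational: keeping the two different $2$-adic thresholds ($\lambda$ for $n$, $\lambda-1$ for $k$) straight through the case split without conflating the "capped at $\lambda$" and "capped at $\lambda-1$" valuations. There is no deep idea here, only careful case analysis, so the write-up should be short.
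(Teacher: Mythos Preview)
Your approach---a prime-by-prime valuation analysis with the only real work at the prime $2$---is essentially the paper's own argument, which likewise splits into cases on the $2$-adic valuation of $d_0$ (phrased there as whether $2^{\lambda-1}$ or $2^\lambda$ divides $d_0$) and handles the odd part implicitly via gcd identities.

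One small slip to fix: in your case $t\ge\lambda-1$, the assertion that ``$v_2(k)=\lambda-1\le t$ forces $v_2(k-d_0)=\lambda-1$'' fails when $t=\lambda-1$ exactly, since then both $k/2^{\lambda-1}$ and $d_0/2^{\lambda-1}$ are odd and hence $v_2(k-d_0)\ge\lambda$. The paper avoids this by separating $t=\lambda-1$ from $t\ge\lambda$; in the former subcase it is $k-d_0$ (not $d_0$) that picks up the factor $2^\lambda$ in its gcd with $n$. Your unordered-pair conclusion survives either way, so this is a bookkeeping correction rather than a genuine gap.
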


\begin{proof}
Suppose first that $2^{\lambda} \nmid d_0$.  Then $(d_0, n) \mid k$, and hence $(d_0, n) = (d_0, k) = a_0$.  If $2^{\lambda - 1} \nmid d_0$, then $(k - d_0, n) = (k - d_0, k) = a_0 = (k - a_0, k) = (k - a_0, n)$.  If $2^{\lambda - 1} \mid d_0$, but $2^{\lambda} \nmid d_0$, then $k - d_0$ and $k - a_0$ are even, and so $(k - d_0, n) = 2(k - d_0, k) = 2a_0 = 2(k - a_0, k) = (k - a_0, n)$.  On the other hand, if $2^{\lambda} \mid d_0$, then $2^{\lambda} \nmid (k - d_0)$ but $2^{\lambda - 1} \mid (k - d_0)$ and $2^{\lambda - 1} \mid (k - a_0)$.  Hence we can use the above argument to obtain $(d_0, n) = (k - (k - d_0), n) = 2(k - (k - d_0), k) = 2(d_0, k) = 2a_0 = 2(k - a_0, k) = (k - a_0, n)$.
\end{proof}

Thus, Theorems 4.2 and 4.3 are established.  In particular, this proves part a. of Theorem 1.1.  Moreover, we have shown in Corollary 3.4 that for $\phi \notin \Omega$, $R(\Gamma,V) \cong \mathbb{Z}_p$, for all absolutely irreducible $V$.  Therefore, to prove part b. of Theorem 1.1, we consider $D_{2n}$ for $n$ even.  If $n$ is either a power of $2$ or equal to $2 q$ for some odd prime $q$, then $\phi \notin \Omega$ if and only if $\phi$ is faithful.  Thus, if one knows that $R(\Gamma,V) \cong \mathbb{Z}_p$, for all absolutely irreducible $V$, then it must be the case that the fusion of $N$ in $\Gamma$ corresponds to $(1,n)$ in the sense of Corollary 4.9.  On the other hand, if $n$ is even, but not as above, then there must exist some odd prime $v$ such that $\theta_v \notin \Omega$.  But then $\theta_1$ and $\theta_v$ have different fusion, but in both cases $R(\Gamma,V) \cong \mathbb{Z}_p$ for all irreducible $V$.  This, together with Theorems 4.2 and 4.3, completes the proof of Theorem 1.1.


\subsection{Abelian Groups}
\label{ss:ab}
In this subsection, we briefly discuss the case when $\Gamma/N$ = $G$ is an abelian group and compare this case to the dihedral case discussed in subsections 4.1-4.5.  In other words, we consider a short exact sequence of groups
$$0\rightarrow\\N\rightarrow\Gamma\rightarrow G = \Gamma/N\rightarrow 1$$
where $G$ is finite abelian, and $N$ is an elementary abelian $p$-group of rank two.  As before, we assume $\mathbb{F}_{p}G$ is semisimple and $\mathbb{F}_p$ is sufficiently large for $G$.  Let $V$ be an irreducible $\mathbb{F}_{p}G$-module viewed as an $\mathbb{F}_{p}\Gamma$-module via inflation.  Let $\phi$ denote the action of $G$ on $N$.  Since $G$ is abelian, $V$ is one-dimensional, and $\phi$ splits into a direct sum of two one-dimensional representations.  Let $\phi$ = $(\theta_1,\theta_2)$, where $\theta_i : G \rightarrow \mathbb{F}_{p}^*$.  We again analyze the extent to which the universal deformation ring $R(\Gamma,V)$ can see the fusion of $N$ in $\Gamma$.  In contrast to the dihedral case, if $G$ is abelian, then $R(\Gamma,V)$ will only be able to detect some information about fusion.
\vspace*{.1 in}

\noindent
\begin{proposition}
\label{pr:ab0}
Let $G$ be abelian, and let $V$ and $\phi$ be as above.  Let ${\{F_{\phi,m}\}}_{m\geq1}$ be the fusion numbers of $\phi$.
\begin{enumerate}
\item[a.]
The universal deformation ring $R(\Gamma,V) \cong \mathbb{Z}_p$ if and only if $F_{\phi,1} = 1$ if and only if both $\theta_1$ and $\theta_2$ are not trivial if and only if $d^1_V = 0$.
\item[b.]
The universal deformation ring $R(\Gamma,V) \cong \mathbb{Z}_p[\mathbb{Z}/p\mathbb{Z}]$ if and only if $F_{\phi,1} = p$ if and only if exactly one of $\theta_1, \theta_2$ is trivial if and only if $d^1_V = 1$.
\item[c.]
The universal deformation ring $R(\Gamma,V) \cong \mathbb{Z}_p[\mathbb{Z}/p\mathbb{Z} \times \mathbb{Z}/p\mathbb{Z}]$ if and only if $F_{\phi,1} = p^2$ if and only if both $\theta_1, \theta_2$ are trivial if and only if $d^1_V = 2$.
\end{enumerate}
\end{proposition}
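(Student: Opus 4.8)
The plan is to reduce all four conditions in the statement to the single invariant
$$t \;:=\; \#\{\, i \in \{1,2\} : \theta_i \text{ is the trivial character of } G\,\},$$
by establishing the three identities $d^1_V = t$, $F_{\phi,1} = p^{t}$, and $R(\Gamma,V)\cong\mathbb{Z}_p[(\mathbb{Z}/p\mathbb{Z})^{t}]$. Since $t$ equals $0,1,2$ precisely when both, exactly one, or neither of $\theta_1,\theta_2$ is trivial, and since the rings $\mathbb{Z}_p$, $\mathbb{Z}_p[\mathbb{Z}/p\mathbb{Z}]$, $\mathbb{Z}_p[\mathbb{Z}/p\mathbb{Z}\times\mathbb{Z}/p\mathbb{Z}]$ are pairwise non-isomorphic (their $\mathbb{Z}_p$-ranks are $1,p,p^{2}$, and $1<p<p^2$ as $p\ge 3$), all the ``if and only if'' statements in a., b., c.\ follow from these three identities.

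For $d^1_V$ and $F_{\phi,1}$: because $G$ is abelian and $\mathbb{F}_p$ is sufficiently large, $V$ is one-dimensional, so $\mathrm{Hom}_{\mathbb{F}_p}(V,V)=V^{*}\otimes V$ is the trivial $\mathbb{F}_pG$-module $\mathbb{F}_p$. By Corollary \ref{co:no1}(a), $\charg{1}{\Gamma}{\mathrm{Hom}_{\mathbb{F}_p}(V,V)}\cong (V_{\tilde\phi})^{\Gamma/N}$. Writing $N = N_1\oplus N_2$ for the two $G$-stable lines, $\tilde\phi = (\theta_1^{-1},\theta_2^{-1})$ and $V_{\tilde\phi}\cong V_{\theta_1^{-1}}\oplus V_{\theta_2^{-1}}$, whose $G$-fixed subspace is the sum of those $V_{\theta_i^{-1}}$ with $\theta_i$ trivial; hence $d^1_V = t$. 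Likewise $F_{\phi,1}$ is the number of $G$-orbits of size $1$ on $N$, i.e.\ $|N^{G}|$, and $(x_1,x_2)\in N_1\oplus N_2$ is $G$-fixed exactly when $x_i = 0$ for each $i$ with $\theta_i$ nontrivial, so $N^{G}=\bigoplus_{\theta_i\text{ trivial}}N_i$ has $p^{t}$ elements.

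For $R(\Gamma,V)$: as $V$ is one-dimensional, a lift of $V$ over $R\in\hat{\mathcal C}$ is exactly a homomorphism $\chi\colon\Gamma\to R^{*}$ reducing mod $\mathfrak m_R$ to the character $\bar\chi$ affording $V$, and two lifts are isomorphic iff these characters coincide (an isomorphism of rank-one free $R$-modules is multiplication by a unit, which does not change the character); thus $\mathrm{Def}_\Gamma(V,R)$ is the set of such $\chi$. Since $p-1$ is prime to $p$, $R^{*}\cong\mathbb{F}_p^{*}\times(1+\mathfrak m_R)$ with $1+\mathfrak m_R$ an abelian pro-$p$ group, and $\ker\bar\chi$ has index dividing $p-1$ in $\Gamma$, hence contains the normal Sylow $p$-subgroup $N$; so $\bar\chi$ is inflated from a character of $G$ and admits a canonical lift $\chi_0\colon\Gamma\to\mathbb{Z}_p^{*}\to R^{*}$ of order prime to $p$. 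Every lift is then uniquely $\chi_0\cdot\eta$ with $\eta\colon\Gamma\to 1+\mathfrak m_R$ a homomorphism, which factors uniquely through the maximal finite $p$-quotient $A$ of $\Gamma^{\mathrm{ab}}$; conversely every homomorphism $A\to 1+\mathfrak m_R\subseteq R^{*}$ arises this way. Hence $\hat F_V$ is naturally isomorphic to $R\mapsto\mathrm{Hom}(A,R^{*})=\mathrm{Hom}_{\hat{\mathcal C}}(\mathbb{Z}_p[A],R)$, so $R(\Gamma,V)\cong\mathbb{Z}_p[A]$. Finally $A\cong(\mathbb{Z}/p\mathbb{Z})^{t}$: $G$ abelian gives $[\Gamma,\Gamma]\subseteq N$, while $G$ acts trivially on $N/[\Gamma,N]$, so $\Gamma/[\Gamma,N]\cong G\times(N/[\Gamma,N])$ is abelian and $[\Gamma,\Gamma]=[\Gamma,N]$, whence $\Gamma^{\mathrm{ab}}\cong G\times(N/[\Gamma,N])$ and $A = N/[\Gamma,N]$ since $|G|$ is prime to $p$; and $[\Gamma,N_i]=N_i$ when $\theta_i$ is nontrivial (as $\theta_i(g)-1\in\mathbb{F}_p^{*}$ for some $g\in G$) and $[\Gamma,N_i]=0$ otherwise, so $N/[\Gamma,N]\cong(\mathbb{Z}/p\mathbb{Z})^{t}$ and $R(\Gamma,V)\cong\mathbb{Z}_p[(\mathbb{Z}/p\mathbb{Z})^{t}]$.

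The only step that is not routine bookkeeping is the identification of the deformation functor of a one-dimensional module with $\mathrm{Hom}_{\hat{\mathcal C}}(\mathbb{Z}_p[A],-)$: this requires care with the Teichm\"uller splitting of $R^{*}$ and checking that $\mathbb{Z}_p[A]$ (finite free over $\mathbb{Z}_p$, local with residue field $\mathbb{F}_p$, hence in $\hat{\mathcal C}$) represents $R\mapsto\mathrm{Hom}(A,R^{*})$; alternatively one may cite this from the literature on deformations of characters. The cohomological computation via Corollary \ref{co:no1}, the fixed-point count, and the abelianization of $\Gamma$ are all elementary, and as a consistency check one sees directly that $d^1_V=\dim_{\mathbb{F}_p}\charg{1}{\Gamma}{\mathbb{F}_p}=\dim_{\mathbb{F}_p}\mathrm{Hom}(A,\mathbb{F}_p)=\mathrm{rank}\,A=t$.
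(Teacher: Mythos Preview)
Your proof is correct and follows the same overall strategy as the paper: reduce everything to the single invariant $t$ (the number of trivial $\theta_i$), compute $d^1_V=t$ via Corollary~\ref{co:no1}, and count $F_{\phi,1}=|N^G|=p^t$. The only real difference is in how the universal deformation ring is identified. The paper simply cites \cite[\S1.4]{mazur} for the formula $R(\Gamma,V)\cong\mathbb{Z}_p[\Gamma^{\mathrm{ab},p}]$ and then, rather than computing $\Gamma^{\mathrm{ab},p}$ directly, observes that it must be one of $\{1,\ \mathbb{Z}/p\mathbb{Z},\ (\mathbb{Z}/p\mathbb{Z})^2\}$ and distinguishes the three cases by the tangent-space dimension $d^1_V$. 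You instead re-derive Mazur's formula from scratch via the Teichm\"uller splitting of $R^{*}$ and then compute $\Gamma^{\mathrm{ab},p}=N/[\Gamma,N]\cong(\mathbb{Z}/p\mathbb{Z})^t$ explicitly by analyzing $[\Gamma,N_i]$ on each line. Your route is more self-contained and makes the dependence on $\phi=(\theta_1,\theta_2)$ completely transparent; the paper's route is shorter but leans on the citation and on Theorem~\ref{thm:udr} to close the loop. Either way the content is the same.
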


In the statement of the proposition, we have added brackets to the group rings for clarity.  The above proposition illustrates the extent to which fusion can be detected by universal deformation rings in the case when $G$ is abelian.  Note that Corollary 3.4 is not applicable, as $\phi$ is reducible.  In contrast to the dihedral case, we get no information by varying $V$, as both $R(\Gamma,V)$ and $d^i_V$ for $i = 1,2$ are constant with respect to $V$.  In the abelian case, while some information about the fusion of $N$ in $\Gamma$ may be detected by the universal deformation ring (and indeed the cohomology), it is simply too coarse to completely determine the full fusion (compare with Theorems 1.1, \ref{th:no2}, and \ref{th:no3}).  Instead, for any absolutely irreducible $V$, $R(\Gamma,V)$ sees only the number of fusion orbits of size $1$, i.e. those elements of $N$ which are not fused.  Additionally, unlike the dihedral case, the fusion numbers are not enough to determine the fusion.  

\bigskip  

\noindent
\textit{Proof of Proposition \ref{pr:ab0}}.  Let $G$ be abelian, and let $V$ and $\phi$ = $(\theta_1,\theta_2)$ be as above.  We first determine the number of fusion orbits of size $1$, i.e. $F_{\phi,1}$.  Considering the action of $\phi$ on $N = {\mathbb{F}}_p \times {\mathbb{F}}_p$, we see that $F_{\phi,1} = p^j$, where $j$ counts how many of $\theta_1$, $\theta_2$ are trivial.  In particular, the fusion of $N$ in $\Gamma$ depends on more than just $F_{\phi,1}$.

Next, we determine $d^i_V, i = 1, 2$.  By Theorem \ref{th:no1} and Corollary \ref{co:no1}, we need to calculate $(V_{\tilde{\phi}}\otimes V^{*}\otimes V)^G$ and $(V_{\rm{det}\circ (\tilde{\phi})}\otimes V^{*}\otimes V)^G$.  Since $V$ is one-dimensional, $V^{*}\otimes V$ is trivial, thus $d^i_V$ is independent of $V$ for $i = 1,2$.  Since $\phi$ = $(\theta_1,\theta_2)$, $d^1_V$ counts how many of $\theta_1$, $\theta_2$ are trivial.  Also, $d^2_V - d^1_V$ is $1$ if $\theta_2 = {\theta_1}^{-1}$, and is $0$ otherwise.  

Finally, we determine $R(\Gamma,V)$.  Since $G$ is abelian, it follows by \cite[\S1.4]{mazur} that $R(\Gamma,V) = \mathbb{Z}_p[\Gamma^{ab, p}]$, where $\Gamma^{ab, p}$ denotes the maximal abelian $p$-quotient of $\Gamma$.  Since the order of $G$ is relatively prime to $p$, $\Gamma^{ab, p}$ can only be the trivial group, or $\mathbb{Z}/p\mathbb{Z}, \textrm{ or } \mathbb{Z}/p\mathbb{Z} \times \mathbb{Z}/p\mathbb{Z}$.  Since $d := d^1_V$ is minimal such that $R(\Gamma,V)$ is a quotient of $\mathbb{Z}_p[[t_1, t_2, ..., t_d]]$, it follows that:

\begin{enumerate}
\item[a.]
$d^1_V = 0$ if and only if $R(\Gamma,V) = \mathbb{Z}_p$,
\item[b.]
$d^1_V = 1$ if and only if $R(\Gamma,V) = \mathbb{Z}_p[\mathbb{Z}/p\mathbb{Z}]$,
\item[c.]
$d^1_V = 2$ if and only if $R(\Gamma,V) = \mathbb{Z}_p[\mathbb{Z}/p\mathbb{Z} \times \mathbb{Z}/p\mathbb{Z}].$
\end{enumerate}
 
\noindent
This completes the proof of Proposition \ref{pr:ab0}.

\bibliographystyle{amsplain}

\end{document}